\numberwithin{equation}{section}
\newtheorem{theorem}{Theorem}[section]
\newtheorem{corollary}{Corollary}[section]
\newtheorem{lemma}{Lemma}[section]
\newtheorem{proposition}{Proposition}[section]
\newtheorem{definition}{Definition}[section]
\newtheorem{remark}{Remark}[section]
\newtheorem{example}{Example}[section]
\renewcommand{\P}{\mathbb{P}}
\newcommand{\R}{\mathbb{R}}
\newcommand{\E}{\mathbb{E}}
\newcommand{\cE}{\mathcal{E}}
\newcommand{\N}{\mathbb{N}}
\newcommand{\F}{\mathcal{F}}
\newcommand{\X}{\mathbb{X}}
\newcommand{\bT}{\mathbb{T}}
\newcommand{\B}{\mathcal{B}}
\newcommand{\T}{\mathcal{T}}
\newcommand{\eps}{\varepsilon}
\newcommand{\nada}[1]{}
\definecolor{gb}{rgb}{0, 0.2, 0.8}
\title{The Optimal Equilibrium for Time-Inconsistent Stopping Problems\\-- the Discrete-Time Case\thanks{We would like to thank Jianfeng Zhang for his suggestion which led to the initiation of this project.}}
\author{Yu-Jui Huang\thanks{
University of Colorado, Department of Applied Mathematics, Boulder, CO 80309-0526, USA, email: \texttt{yujui.huang@colorado.edu}. Partially supported by National Science Foundation (DMS-1715439) and the University of Colorado (11003573).}
 \and Zhou Zhou\thanks{
University of Sydney, School of Mathematics and Statistics, NSW 2006, Australia, email: \texttt{zhou.zhou@sydney.edu.au}.}
}
\date{\today}
\begin{document}
\maketitle

\begin{abstract}
We study an infinite-horizon discrete-time optimal stopping problem under non-exponential discounting. A new method, which we call the {\it iterative approach}, is developed to find subgame perfect Nash equilibria. When the discount function induces decreasing impatience, we establish the existence of an equilibrium through fixed-point iterations. Moreover, we show that there exists a unique {\it optimal} equilibrium, which generates larger values than any other equilibrium does {\it at all times}. To the best of our knowledge, this is the first time a dominating subgame perfect Nash equilibrium is shown to exist in the literature of time-inconsistency.
\end{abstract}

\textbf{MSC (2010):} 
49K21, % Optimality conditions:	Problems involving relations other than differential equations
60J05,  %	Discrete-time Markov processes on general state spaces
91A13, % Games with infinitely many players
93E20. % Optimal stochastic control
\smallskip

\textbf{Keywords:} time-inconsistency, optimal stopping, non-exponential discounting, fixed-point iteration, optimal equilibrium.

%%%%%%%%%%%%%%%%%%%%%%%%%%%%%%%%%%%%%%%%%%%%%%%%%%%%
%%%%%%%%%%%%%%%%%%%%%%%%%%%%%%%%%%%%%%%%%%%%%%%%%%%%

\section{Introduction}\label{sec:introduction}
When faced with time-inconsistency in an optimal control or stopping problem, if the agent cannot precommit his future behavior, Strotz \cite{Strotz55} suggested the {\it strategy of consistent planning}. First, one should figure out the strategies that he will actually follow over time. These strategies are formulated as subgame perfect Nash equilibria  in subsequent literature, and constantly called {\it equilibrium strategies}. Next, in Strotz' own words, one should ``choose the best plan among those he will actually follow''. Mathematically, consistent planning gives rise to two problems: 
\begin{itemize}
\item [(a)] %How do we rigorously define the ``strategies that one will actually follow'', or the so-called {\it equilibrium strategies} in subsequent literature? And 
How do we find equilibrium strategies?
\item [(b)] How do we select the optimal equilibrium strategy? In particular, how do we formulate optimality for equilibrium strategies? 
\end{itemize}
In a discrete-time model with a finite time horizon, backward sequential optimization gives simple answers to (a) and (b). As detailed in Pollak \cite{Pollak68}, given an $N$-period model with time points $t=0,1,...,N$, %$0=t_0<t_1<...<t_{N-1}<t_N=T<\infty$, 
one first determines an optimal strategy at $t=N-1$ for the last period $[N-1,N]$.  
%in the last period $[t_{N-1},T]$. 
Then, at each $t=N-2, N-3,...,0$, one chooses an optimal strategy for the period $[t,t+1]$, %on $[t_{i},t_{i+1}]$, 
in response to his future selves' given strategy on $[t+1,N]$. It is well-known that the resulting strategy on $[0,N]$ is the {\it unique} equilibrium; see e.g. the introductions of \cite{Kocher96} and \cite{Asheim97}, and also Corollary~\ref{coro:finite horizon} below for a similar result.
%constitute the entire set of equilibrium strategies. Moreover, they are {\it unique in value:} at any time point, every equilibrium yields the same value to the agent. %Question (b) is then trivial here. 
%that gives the agent the largest possible value at each time point, and thus resolves (a) and (b) once and for all. 

When the time horizon is infinite, (a) and (b) become much more involved. First, the above backward construction breaks down, as there is no fixed terminal time to start the procedure. While one can still manage to apply the backward idea to infinite-horizon problems, as shown in \cite{PP68}, \cite{PY73}, and \cite{GW07}, among others, the methods employed are normally ad hoc and the main purpose is to demonstrate existence of at least a few specific equilibria. In short, a general systematic approach for finding equilibrium strategies is lacking. More importantly, there can be multiple equilibrium strategies under infinite horizon, as observed in \cite{PP68}, \cite{Asheim97} and \cite{Kocher96}. How to select the appropriate equilibrium is then a genuine problem. The main approach in the literature is to impose additional delicate assumptions, besides subgame perfection, on how an agent makes decisions. % under time-inconsistency. 
These assumptions, such as ``reconsideration-proofness'' in %Kocherlakota 
\cite{Kocher96} and ``revision-proofness'' in %Asheim 
\cite{Asheim97}, lead to a refined collection of equilibrium strategies, which can be much smaller than the whole set of equilibrium strategies. The hope is that every refined equilibrium {\it might} generate the same value, and thus resolves (b) in a trivial way.  
%In some cases, it can even be shown that refined equilibriums are unique in value, and thus resolves (b) trivially. 
Despite some positive progress in this direction, refined equilibrium strategies are {\it not} always unique in value. It is still unclear in general how to select the appropriate equilibrium, even though we have a smaller set of candidates.
 
In this paper, we investigate a general infinite-horizon stopping problem in which an agent maximizes his expected payoff, under non-exponential discounting, by choosing an appropriate time to stop a discrete-time %time-homogeneous 
Markov process $X$ valued in some Polish space $\X$. Under current context, a stopping rule for the agent can be represented by the corresponding stopping region in $\X$. To tackle (a), a new method, which we call the {\it iterative approach}, is developed. Specifically, we formulate {\it equilibrium stopping regions} (or {\it equilibria} for short) as fixed points of an operator $\Theta$, defined precisely in \eqref{Theta} below. To find equilibria, we simply carry out fixed-point iterations: for any stopping region $S\subseteq\X$, we show that
\begin{equation}\label{iteration}
S_\infty := \lim_{n\to\infty}\Theta^n(S)
\end{equation}
is indeed an equilibrium (i.e. $\Theta(S_\infty)=S_\infty$), as long as $\Theta(S)\subseteq S$ holds in the first iteration; see Theorem~\ref{t1} for details.%, which is the first main result of this paper.

Note that the traditional backward sequential optimization and our iterative approach both treat an time-inconsistent problem as an intra-personal game between current and future selves. The key difference is that the former uses a {\it sequential-game} point of view, whereas the latter takes up a {\it simultaneous-game} perspective. This new perspective gets rid of structural limitations of a sequential game, which allows the iterative approach to be very flexible: it can be applied with ease to discrete-time infinite-horizon problems (as in the present paper), as well as continuous-time problems (as in Huang and Nguyen-Huu \cite{HN17} and Huang, Nguyen-Huu, and Zhou \cite{HNZ17}); see the detailed discussion below Remark~\ref{rem:finite horizon}.

The iterative approach has novel consequences for (b). We show that there exists a unique equilibrium that {\it always} dominates other equilibria: it generates larger value to the agent than any other equilibrium does, at {\it any} state $x\in\X$ of the process $X$; see Theorems~\ref{t2} and \ref{t3}. This particular equilibrium is certainly ``the best plan among those the agent will actually follow'' as Strotz \cite{Strotz55} prescribed, and thus we call it the {\it optimal} equilibrium. To the best of our knowledge, this is the first time a dominating subgame perfect Nash equilibrium is shown to exist in the literature of time-inconsistency.  

Theorems~\ref{t2} and \ref{t3} hinge on two crucial conditions. First, we require the payoff function of the agent to be either lower or upper semicontinuous, and the transition kernel of the Markov process $X$ to be lower semicontinuous under the weak star topology. Second, the discount function is required to be log sub-additive, i.e. satisfy \eqref{DI} below. This condition corresponds to {\it decreasing impatience}, a widely-observed feature of empirical discounting in Behavioral Economics and Finance; see the discussion below \eqref{DI} for details. Furthermore, we demonstrate in Example~\ref{eg:equili may not exist} that if \eqref{DI} is not satisfied, an equilibrium may not even exist. We can thus consider \eqref{DI} as a necessary condition for all the game-theoretic derivations to resolve time-inconsistency.

It is worth noting that over the past decade, Ekeland and Lazrak \cite{EL06} have aroused vibrant research on time-inconsistency in Mathematical Finance. This includes \cite{EP08}, \cite{EMP12}, \cite{HJZ12}, \cite{Yong12}, \cite{BMZ14}, and \cite{BKM17}, among others, which {\it all} focus on continuous-time models. They work on formulating continuous-time definitions of equilibrium strategies, and characterizing equilibrium strategies through delicate systems of differential equations. % (the so-called extended HJB equation). 
How to deal with multiple equilibria, nonetheless, has not yet been addressed. It is then of interest to investigate if 
an optimal equilibrium can exist in continuous time, as in our current discrete-time setup. We will take this as a separate research project.%We leave it for our future research. 
%the existence of a unique optimal equilibrium can be extended to continuous time from our current setup. 

%This system of equation is difficult to solve, with its well-posedness largely unexplored. Even in cases where the equation can be solved, it is still unclear how to deal with multiple equilibriums. 

%What has not been addressed includes how to solve such a system of equations (and thus find an equilibrium) is

The paper is organized as follows. Section \ref{sec:iterative approach} introduces our time-inconsistent stopping problem, and proposes the iterative approach to finding equilibria. The approach is made rigorous in Section \ref{sec:existence}, where the convergence of the fixed-point iteration \eqref{iteration} is proved. Section \ref{sec:optimal} establishes the existence of a unique optimal equilibrium. % exists, under appropriate continuity assumptions.
Section \ref{sec:example} studies a practical stopping problem, which illustrates our theoretic results explicitly. 
Section \ref{sec:conclusion} concludes the paper. %, and points out a few directions for future research. 

%%%%%%%%%%%%%%%%%%%%%%%%%%%%%%%%%%%%%%%%%%%%%%
%%%%%%%%%%%%%%%%%%%%%%%%%%%%%%%%%%%%%%%%%%%%%%

\section{The Iterative Approach}\label{sec:iterative approach}
Consider a probability space $(\Omega,\F,\P)$ that supports a time-homogeneous Markov process $X=(X_t)_{t=0,1,\dotso}$ taking values in some Polish space $\X$. Let $\mathcal{B}(\X)$ be the family of Borel sets in $\X$, and $Q$ the transition kernel of $X$. 
%Let $\Omega$ be a polish space and $\mathcal{B}(\Omega)$ be the family of its Borel sets. Let $X=(X_t)_{t=0,1,\dotso}$ be a time-homogeneous Markov chain taking values in $\Omega$, with its transition kernel denoted by $Q$. 
Specifically, for any $x\in\X$ and $A\in\mathcal{B}(\X)$,
\begin{equation}\label{kernel}
P(X_{t+1}\in A\,|\,X_t=x)=\int_A Q(x,dy),\quad \hbox{for all}\ t=0,1,\dotso.
\end{equation}
Let $\mathbb{F}=(\mathcal{F}_t)_{t\in\N}$ be the filtration generated by $X$, and $\T$ be the collection of all $\mathbb{F}$-stopping times. For each $x\in\X$, if $X_0=x$, we will constantly write $X$ as $X^x$ to emphasize the initial point, and denote by $\E^x[\cdot]$ the expectation conditional on $X_0=x$. 

%Consider a payoff function $f:\ \Omega\mapsto\R$, assumed to be non-negative, bounded, and Borel measurable. Also, let $\delta:\ \N\cup\{0\}\mapsto [0,1]$ represent a discount function, assumed to be a strictly decreasing with $\delta(0)=1$. 

For any $x\in\X$ and $\tau\in\T$, consider the objective function
\begin{equation}\label{J}
J(x,\tau):=\E^x\left[\delta(\tau)f\left(X_\tau^{}\right)\right].
\end{equation}%\footnote{More general, we may consider $\E[u(\tau,X_\tau^{0,x})]$.}
Here, $f:\ \X\mapsto\R$ is a payoff (or, profit) function, assumed to be non-negative, bounded, and Borel measurable; $\delta:\ \N\cup\{0\}\mapsto [0,1]$ is a discount function, assumed to be a strictly decreasing with $\delta(0)=1$ and $\lim_{k\to\infty}\delta(k)=0$. For any $\omega\in\Omega$, if $\tau(\omega)=\infty$, we set $\delta(\tau)f\left(X^x_\tau\right)(\omega):=0$ for all $x\in\X$, which is consistent with $\lim_{k\to\infty}\delta(k)=0$ and the boundedness of $f$.

Given current state $x\in\X$, an agent intends to maximize his expected discounted payoff (or, profit) by choosing an appropriate stopping time, i.e.
\begin{equation}\label{v}
%v(t,x):= 
\sup_{\tau\in\T} J(x,\tau). 
\end{equation}
Under fairly general conditions, an optimal stopping time for \eqref{v}, denoted by $\widetilde\tau_{x}\in\T$, exists for each $x\in\X$. A natural question is then whether optimal stopping times obtained at different moments, for instance $\widetilde\tau_{x}$ and $\widetilde\tau_{X^{x}_t}$ with $t>0$, are consistent with each other.  

\begin{definition}[Time-Consistency]
The problem \eqref{v} is time-consistent if for any $x\in\X$ and $t\ge 0$, $\widetilde{\tau}_{x}(\omega) = \widetilde{\tau}_{X_t^{x}}(\omega)$ for a.e. $\omega\in \{\widetilde{\tau}_{x}>t\}$. We say \eqref{v} is time-inconsistent if the above relation does not hold.
\end{definition}
It is well-known that the problem \eqref{v} is time-consistent with $\delta(s) := \frac{1}{(1+\beta)^s}$, for any given $\beta>0$, but time-inconsistent in general for an arbitrary discount function $\delta$. Since one may re-evaluate and change his choice of stopping times over time under time-inconsistency, his stopping strategy is not a single stopping time, but a stopping policy defined below.

\begin{definition}\label{def:policy}%[Stopping Policy]
A Borel measurable function $\tau:\mathbb{X}\to\{0,1\}$ is called a stopping policy. %We denote by $\T(\X)$ the set of all stopping policies.
\end{definition}

The intuition behind is as follows. Given current state $x\in\X$, a stopping policy $\tau$ governs when the agent stops: he stops at the first time $\tau(X^{x}_s)$ yields the value 0.

\begin{remark}\label{rem:stopping region}
Definition~\ref{def:policy} can be equivalently formulated using ``stopping regions''.   
%Note that a stopping policy can be formulated equivalently using its corresponding stopping region. 
By definition, 
\[
\tau:\mathbb{X}\mapsto\{0,1\}\ \hbox{is a stopping policy}\ \iff\ \tau(x)= 1_{S^c}(x)\ \hbox{for some $S\in\B(\X)$}. 
\]
Obviously, $S=\{x\in\X : \tau(x)=0\}$, and we call it the {\it stopping region} of the policy $\tau$. 
\end{remark}

To resolve time-inconsistency, the agent should first figure out ``the strategies that he will actually follow'' as Strotz \cite{Strotz55} proposed. To this end, he needs to take into account his future selves' behavior, and find the best response to that. Specifically, suppose that the agent initially planned to take $S\in\B(\X)$ as the stopping region. Now, at any state $x\in\X$, the agent carries out the game-theoretic reasoning: ``assuming that all my future selves will take $S\in\B(\X)$ as the stopping region, what is the best stopping strategy today in response to that?'' The agent today actually has only two possible actions: stopping and continuation. If he stops, he gets $f(x)$ immediately. If he continues, given that all his future selves will follow $S\in\B(\X)$, he will eventually stop at the moment
\begin{equation}\label{rho}
\rho(x,S):=\inf\{t\geq 1:\ X_t^{x}\in S\}\in\T.
\end{equation}
This leads to the expected payoff $J(x,\rho(x,S))$. Thus, by comparing the two payoffs $f(x)$ and $J(x,\rho(x,S))$, we obtain the best stopping strategy for today, whose stopping region is given by  
\begin{equation}\label{Theta}
\Theta(S):=\{x\in\X:\ f(x)\geq J(x,\rho(x,S))\}.
\end{equation}

\begin{remark}\label{rem:nonempty}
For any $S\in\B(\X)$, $\Theta(S)$ is nonempty. In fact, 
\begin{equation}\label{E}
\Theta(S) \supseteq E:= \{x\in\X : f(x) > M\delta(1)\} \neq\emptyset,\quad \hbox{where}\ \ M:= \sup_{x\in\X} f(x)<\infty.
\end{equation}
Indeed, by \eqref{J}, $J(x,\rho(x,S))\le \delta(1) M < M$. The definition of $M$ ensures that $E$ is nonempty. Now, for any $x\in E$, $f(x)> M \delta(1)\ge J(x,\rho(x,S))$, and thus $x\in \Theta(S)$. Hence, $E\subseteq\Theta(S)$.
\end{remark}

\begin{definition}\label{def:equilibrium}
We call $S\in\mathcal{B}(\X)$ an equilibrium stopping region (or equilibrium for short) if $\Theta(S)=S$. We denote by $\cE$ the set of all equilibria.
\end{definition}

\begin{remark}\label{rem:Theta}
By definition, $S\in\cE$ if and only if
\begin{equation}
\begin{cases}
f(x)\geq J(x,\rho(x,S)),& x\in S,\\
f(x)< J(x,\rho(x,S)),& x\in \X\setminus S.
\end{cases}
\end{equation}
\end{remark}

\begin{remark}\label{rem:equi nonempty}
If an equilibrium exists, it must contain the nonempty set $E$ defined in \eqref{E}. Indeed, if $S\in\B(\X)$ is an equilibrium, then $S=\Theta(S)\supseteq E\neq\emptyset$, where the inclusion follows from Remark~\ref{rem:nonempty}.
\end{remark}

%\begin{proposition}\label{prop:equi nonempty}
%If $S\in\B(\X)$ is an equilibrium, then $S\supseteq E\neq\emptyset$, with $E$ defined in \eqref{E}.
%\end{proposition}

%\begin{proof}
%First, we claim that $S\neq\emptyset$. Suppose $S=\emptyset$. Then, for any $x\in\X$, $\rho(x,S)=\infty$ and thus $J(x,\rho(x,S))=0$, in view of the discussion below \eqref{J}. By Remark~\ref{rem:Theta}, $f(x)<J(x,\rho(x,S)) =0$, which contradicts the nonnegativity of $f$. Now, with $S\neq\emptyset$, we have $S=\Theta(S)\supseteq \{x\in\X : f(x) > M\delta(1)\} \neq\emptyset$, where the inclusion follows from Remark~\ref{rem:nonempty}.
%\end{proof}

In view of \eqref{Theta}, we can consider $\Theta$ as an operator mapping $\mathcal{B}(\X)$ to itself. An equilibrium is then a fixed point of $\Theta$. To find an equilibrium, we carry out fixed-point iterations: for any initial stopping region $S\in\B(\X)$, we expect $S_\infty$, defined as in \eqref{iteration}, to be an equilibrium. To make this {\it iterative approach} rigorous, we need to show that (i) the limit in \eqref{iteration} converges, so that $S_\infty$ is well-defined, and (ii) $S_\infty$ is indeed an equilibrium, i.e. $\Theta(S_\infty)=S_\infty$. These will be analyzed in detail in Section~\ref{sec:existence} below.

%%%%%%%%%%%%%%%%%%%%%%%%%%%%%%%%%%%%%

\subsection{Comparison with Backward Sequential Optimization}
In this subsection, we will show that in the finite-horizon case, our iterative approach and the traditional backward sequential optimization yield the same result. Consider a finite horizon $N\in\N$ and let $\bT:=\{0,1,...,{N-1}\}$. The objective function \eqref{J} becomes
\[
J(t,x,\tau):=\E^{t,x}[\delta(\tau-t)f(X_\tau)],\quad \hbox{for}\ (t,x)\in(\bT\cup\{N\})\times\X\ \hbox{and}\ \tau\in\T_t.
\]
Here, $\T_t$ denotes the set of stopping times $\tau\in\T$ with $t\le\tau\le N$ a.s. and $\E^{t,x}$ is the expectation conditional on $X_t=x$. The stopping problem \eqref{v} now takes the form
\[
\sup_{\tau\in\T_t} J(t,x,\tau).
\]
We now formulate stopping policies and equilibrium stopping regions as in Definitions~\ref{def:policy} and \ref{def:equilibrium}.  

\begin{definition}
Under current finite-horizon setting, a Borel measurable function $\tau:\bT\times\X\mapsto\{0,1\}$ is called a stopping policy (here, we use the discrete topology for $\bT$). Moreover, we say $S\in\B(\bT\times\X)$ is an equilibrium stopping region (or equilibrium for short) if $\Theta(S)=S$, where
\begin{equation}\label{Theta'}
\begin{split}
\Theta(S)&:=\{(t,x)\in\bT\times\X:\ f(x)\geq J(t,x,\rho(t,x,S))\},
\end{split}
\end{equation}
with $\rho(t,x,S):= \inf\{s\ge t+1: (s, X^{t,x}_s)\in S\}\wedge N$.
\end{definition}

Since the agent will stop at the terminal time $N$ if he has not stopped before then, the iteration \eqref{iteration} has a fairly simple structure. In the proofs below, for any $S\in\B(\bT\times\X)$, we will use the notation $S_n:= \Theta^n(S)$ for all $n\in\N\cup\{0\}$.

\begin{lemma}\label{lem:finite horizon}
Under current finite-horizon setting, fix $S\in\B(\bT\times\X)$. For any $t\in\bT$, we have
\begin{itemize}
\item [(i)] the $t$-section of $S_n$ coincides with the $t$-section of $S_{N-t}$, for all $n\ge N-t$. That is, 
\[
S_{n}\cap(\{t\}\times\X) = S_{N-t}\cap(\{t\}\times\X)\quad \hbox{for all}\ n\ge N-t.
\]
\item [(ii)] $S_{N-t}\cap(\{t\}\times\X)$ is independent of the choice of $S\in\B(\bT\times\X)$. 
\end{itemize}
\end{lemma}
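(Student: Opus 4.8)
My plan is to prove (i) and (ii) together by forward induction on $k:=N-t$, the key point being a \emph{locality} property of the operator $\Theta$: for any $S\in\B(\bT\times\X)$ and any $t\in\bT$, the $t$-section $\Theta(S)\cap(\{t\}\times\X)$ depends on $S$ only through its sections at the strictly later times $t+1,\dots,N-1$. This is immediate from the definition $\rho(t,x,S)=\inf\{s\ge t+1:(s,X^{t,x}_s)\in S\}\wedge N$: since $S\subseteq\bT\times\X$ contains no point whose first coordinate equals $N$, the set $\{s\ge t+1:(s,X^{t,x}_s)\in S\}$ is really $\{s\in\{t+1,\dots,N-1\}:(s,X^{t,x}_s)\in S\}$, and $\rho(t,x,S)=N$ whenever this set is empty. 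Hence $\rho(t,x,S)$, and therefore $J(t,x,\rho(t,x,S))$ and the section $\{x\in\X:f(x)\ge J(t,x,\rho(t,x,S))\}$, are unchanged if $S$ is replaced by any $S'$ with $S'\cap(\{t+1,\dots,N-1\}\times\X)=S\cap(\{t+1,\dots,N-1\}\times\X)$. (Throughout, each $S_n=\Theta^n(S)$ is a well-defined Borel subset of $\bT\times\X$, since $\Theta$ maps $\B(\bT\times\X)$ into itself.)

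For the base case $k=1$, i.e.\ $t=N-1$, the set $\{t+1,\dots,N-1\}$ is empty, so $\rho(N-1,x,S)=N$ for every $x$ and every $S$, and thus $\Theta(S)\cap(\{N-1\}\times\X)=\{x\in\X:f(x)\ge \E^{N-1,x}[\delta(1)f(X_N)]\}$, which is literally the same set for every $S$. Applying this with $S$ replaced by $S_{n-1}$ for each $n\ge 1$ yields $S_n\cap(\{N-1\}\times\X)=S_1\cap(\{N-1\}\times\X)$ for all $n\ge 1$, a set independent of the choice of $S$; this is exactly (i) and (ii) for $t=N-1$.

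For the inductive step, suppose (i) and (ii) hold for every time in $\{N-k+1,\dots,N-1\}$ and set $t=N-k$. For each $t'\in\{t+1,\dots,N-1\}$ we have $N-t'\le k-1$, so by the inductive hypothesis $S_n\cap(\{t'\}\times\X)=S_{N-t'}\cap(\{t'\}\times\X)$ for all $n\ge k-1$, and this section is independent of $S$. Consequently, for every $n\ge k-1$ the restriction $S_n\cap(\{t+1,\dots,N-1\}\times\X)$ is one and the same set, independent of $S$. By the locality property, $\Theta(S_n)\cap(\{t\}\times\X)$ is then a fixed set $A_t$ for every $n\ge k-1$, and $A_t$ does not depend on $S$; equivalently, $S_n\cap(\{t\}\times\X)=A_t$ for all $n\ge k$. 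Taking $n=k$ identifies $A_t=S_k\cap(\{t\}\times\X)$, which gives both $S_n\cap(\{t\}\times\X)=S_{N-t}\cap(\{t\}\times\X)$ for all $n\ge N-t$ (part (i)) and the independence of this section from $S$ (part (ii)). The induction runs through $k=N$, i.e.\ down to $t=0$.

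I do not expect any real obstacle here: once the locality property is isolated, the rest is bookkeeping. The only points needing care are making that property precise — in particular observing that it is the truncation $\wedge N$ together with $S\subseteq\{0,\dots,N-1\}\times\X$ that forces the $(N-1)$-section, and then inductively every section, to stabilize after finitely many iterations — and keeping the index shifts ($n\ge k-1$ for the inputs, $n\ge k$ for the output) consistent. No semicontinuity or topological hypotheses enter; this is a purely combinatorial consequence of the finite-horizon structure.
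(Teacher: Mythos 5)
Your proof is correct and follows essentially the same route as the paper: the ``locality'' property you isolate is exactly the paper's observation that $\rho(t,x,S)$ depends only on the sections of $S$ at times $t+1,\dots,N-1$, and your induction on $k=N-t$ is just the fully written-out version of the ``straightforward induction'' the paper invokes after treating $t=N-1$ and $t=N-2$ explicitly. The index bookkeeping ($n\ge k-1$ for inputs, $n\ge k$ for outputs) is handled correctly.
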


\begin{proof}
We will prove (i) and (ii) by showing that $S_n\cap(\{t\}\times\X)$ is independent of $n\ge N-t$ and $S\in\B(\bT\times\X)$. For any fixed $t\in\bT$, by \eqref{Theta'} and the definition of $S_n$, we have
\[
S_n\cap(\{t\}\times\X) = \{(t,x): x\in\X,\ f(x)\ge  J(t,x,\rho(t,x,S_{n-1})) \},\quad\forall n\in\N.
\]
For $t=N-1$, since the agent will stop at the terminal time $N$ if he has not stopped before then, we must have $\rho(N-1,x,S')=N$ for any $x\in\X$ and $S'\in\B(\bT\times\X)$. It follows that
\begin{equation}\label{indep. n>=1}
S_n\cap(\{N-1\}\times\X) = \{(N-1,x): x\in\X,\ f(x)\ge  J(N-1,x,N) \},\quad\forall n\in\N.
\end{equation}
This shows that $S_n\cap(\{N-1\}\times\X)$ is independent of $n\ge 1$ and $S\in\B(\bT\times\X)$. 
For $t=N-2$, observe that whether $\rho(N-2,x,S_{n-1})$ is equal to $N-1$ or $N$ depends solely on $S_{n-1}\cap(\{N-1\}\times\X)$. Since \eqref{indep. n>=1} indicates that $S_{n-1}\cap(\{N-1\}\times\X)$ is independent of $n\ge 2$ and $S\in\B(\bT\times\X)$, we conclude that for each $x\in\X$,
\[
T(N-2,x) := \rho(N-2,x,S_{n-1})
\]
is independent of $n\ge 2$ and $S\in\B(\bT\times\X)$. 
%we deduce from \eqref{indep. n>=1} and the fact that the agent will stop by the terminal time $N$ that $T(N-2,x) := \rho(N-2,x,\Theta^{n-1}(S))$ is independent of $n\ge 2$ and $S\in\B(\bT\times\X)$. 
It follows that 
\[
S_n\cap(\{N-2\}\times\X) = \{(N-2,x): x\in\X,\ f(x)\ge  J(N-2,x,T(N-2,x)) \}
\]
is independent of $n\ge 2$ and $S\in\B(\bT\times\X)$. A straightforward induction gives the desired result.
\end{proof}

\begin{corollary}\label{coro:finite horizon}
Under current finite-horizon setting, 
\begin{itemize}
\item [(i)] for any $S\in\B(\bT\times\X)$, $S_\infty = S_N$ is an equilibrium; 
\item [(ii)] for any $S, S'\in\B(\bT\times\X)$, we have $S_\infty=S'_\infty$.   
\end{itemize}
Hence, there exists a unique equilibrium.  
\end{corollary}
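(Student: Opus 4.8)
The plan is to deduce everything from Lemma~\ref{lem:finite horizon}, exploiting that $\bT=\{0,1,\dots,N-1\}$ is \emph{finite}, so that $N-t\le N$ for every $t\in\bT$. First I would observe that, by part (i) of Lemma~\ref{lem:finite horizon}, for any $n\ge N$ and any $t\in\bT$ one has $S_n\cap(\{t\}\times\X)=S_{N-t}\cap(\{t\}\times\X)$, and likewise $S_N\cap(\{t\}\times\X)=S_{N-t}\cap(\{t\}\times\X)$ since $N\ge N-t$; hence $S_n\cap(\{t\}\times\X)=S_N\cap(\{t\}\times\X)$ for every $t\in\bT$. Because the sets $\{t\}\times\X$, $t\in\bT$, partition $\bT\times\X$, this yields $S_n=S_N$ for all $n\ge N$. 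Thus the iteration $\Theta^n(S)$ is eventually constant, the limit in \eqref{iteration} is well-defined, and $S_\infty=S_N$. Moreover $\Theta(S_\infty)=\Theta(S_N)=S_{N+1}=S_N=S_\infty$ (using $N+1\ge N$), so $S_\infty$ is an equilibrium; this proves (i).

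For (ii) I would combine the identity $S_N\cap(\{t\}\times\X)=S_{N-t}\cap(\{t\}\times\X)$ with part (ii) of Lemma~\ref{lem:finite horizon}: the right-hand side does not depend on the choice of $S\in\B(\bT\times\X)$, hence neither does $S_N\cap(\{t\}\times\X)$ for each $t\in\bT$, and therefore $S_N$ itself is independent of $S$. Since $S_\infty=S_N$, this is exactly the assertion $S_\infty=S'_\infty$ for all $S,S'\in\B(\bT\times\X)$.

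Existence and uniqueness of the equilibrium then follow immediately. Existence is part (i). For uniqueness, if $S\in\B(\bT\times\X)$ is any equilibrium, then $\Theta(S)=S$ forces $S_n=S$ for all $n$, so $S=S_\infty$; but by (ii) the set $S_\infty$ is one and the same regardless of the starting region, hence every equilibrium equals this common set.

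I do not expect a genuine obstacle here, since Lemma~\ref{lem:finite horizon} already carries the inductive content; the only point that needs a little care is the bookkeeping that, $\bT$ being finite, a single index $n\ge N$ makes \emph{all} $t$-sections stabilize simultaneously, so that $S_n$ itself (not merely each of its sections) is constant for $n\ge N$ and the limit defining $S_\infty$ literally stabilizes.
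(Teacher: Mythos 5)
Your proposal is correct and follows essentially the same route as the paper: both use Lemma~\ref{lem:finite horizon}(i) to conclude $S_n=S_N$ for all $n\ge N$ (hence $S_\infty=S_N$ and $\Theta(S_N)=S_{N+1}=S_N$), and Lemma~\ref{lem:finite horizon}(ii) for the independence of the starting region. Your explicit closing argument for uniqueness (any equilibrium $S$ satisfies $S=S_\infty$, which by (ii) is the same set for every starting region) is a correct elaboration of what the paper leaves implicit.
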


\begin{proof}
To prove (i), note that Lemma~\ref{lem:finite horizon} (i) immediately implies that for any $n\ge N$, 
\[
S_{n}\cap(\{t\}\times\X) = S_{N}\cap(\{t\}\times\X),\quad \hbox{for all}\ t\in \bT.
\]
That is, $S_n = S_N$ for all $n\ge N$. It follows that $S_\infty=S_N$, and $S_N$ is an equilibrium (as $\Theta(S_N)=S_{N+1}=S_N$). Finally, (ii) is a direct consequence of the independence of $S\in\B(\bT\times\X)$ in Lemma~\ref{lem:finite horizon} (ii). 
\end{proof}

\begin{remark}\label{rem:finite horizon}
The proof of Lemma~\ref{lem:finite horizon} indicates that each additional iteration of $\Theta$ corresponds to a further round of backward optimization in Pollak \cite{Pollak68}. Indeed, the first iteration of $\Theta$ determines the $(N-1)$-section of $S_\infty$, which is precisely the backward optimization in \cite{Pollak68} for the period $[N-1,N]$. Similarly, the $n^{th}$ iteration of $\Theta$, $n=2,3,...,N$, determines the $(N-n)$-section of $S_\infty$, which amounts to backward optimization for the period $[N-n,N-n+1]$. Therefore, under finite horizon, the unique equilibrium $S_\infty$ obtained through the iterative approach is the same as the equilibrium generated by backward sequential optimization. 
\end{remark}

Although giving the same result under finite horizon, backward sequential optimization and our iterative approach are quite different in nature. Let us call the agent at time $t$ ``Player $t$'', for all $t\in\bT$. The standard backward approach views the time-inconsistent problem as a {\it sequential game}. Player $(N-1)$ chooses his action first; Player $(N-2)$ does so later, given Player $(N-1)$'s choice of action; similarly, for all $t< N-2$, Player $t$ makes a decision, given the actions of all his future selves. Note that the sequential game structure entails (i) identifying a specific player who acts first (the ``first player''), and (ii) at most countably many players.

By contrast, our iterative approach materializes the traditional wisdom, ``take into account future selves' actions'', in a {\it simultaneous game}. All players choose their actions simultaneously, without prior knowledge of others' choices. Each player decides his best strategy based on his initial belief on how the others will behave. Expecting that other players are reasoning in the same way, each player pursues recursive reasoning (``I think that you think that I think...''), which is characterized by the fixed-point iteration of $\Theta$ in \eqref{iteration}. Specifically, suppose that an agent initially planned to take $S\in\B(\X)$ as the stopping region. At time $0$, with current state $x_0\in\X$, the agent carries out the game-theoretic thinking as depicted below Remark~\ref{rem:stopping region}, and finds that it is better to take $\Theta(S)$ as the stopping region. Any of his future selves, at a possibly different state $y\in\X$, can reason in the same way and conclude that $\Theta(S)$ is a better choice, too. Thus, this first round of game-theoretic thinking, performed simultaneously by all players (indexed by $x\in\X$), changes the choice of stopping regions from $S$ to $\Theta(S)$. Given this, all players (indexed by $x\in\X$) can carry out the same game-theoretic thinking again simultaneously, and conclude that $\Theta^2(S)$ is a better choice than $\Theta(S)$. This procedure continues until one reaches the equilibrium $S_\infty$, which cannot be improved further by the game-theoretic thinking, i.e. applying $\Theta$.

Note that this gets rid of structural limitations of a sequential game, making the iterative approach very 
flexible: it can be applied easily to (i) infinite discrete time, where the ``first player'' cannot be identified (as in the present paper), and (ii) continuous time, where there are uncountably many players (as in Huang and Nguyen-Huu \cite{HN17} and Huang, Nguyen-Huu, and Zhou \cite{HNZ17}).
 
%Note that unlike the continuous time case, in this setup there is no trivial equilibrium any more. 

%%%%%%%%%%%%%%%%%%%%%%%%%%%%%%%%%%%%%%%%%
%%%%%%%%%%%%%%%%%%%%%%%%%%%%%%%%%%%%%%%%%

\section{Existence of an equilibrium}\label{sec:existence}

In this section, we will derive the main convergence result for our iterative approach in Theorem~\ref{t1}, which immediately leads to the existence of an equilibrium in Corollary~\ref{coro:existence of equi}.

First, observe that unlike the finite-horizon case where there is a unique equilibrium (see Corollary~\ref{coro:finite horizon}), an equilibrium may fail to exist under infinite horizon. 

\begin{example}\label{eg:equili may not exist}
Consider $\X=\{1,2\}$, $f(x)=x$, and let
\begin{equation}\label{tran prob}
\mathbb{P}(X_{t+1}=1\,|\,X_t=1)=\mathbb{P}(X_{t+1}=2\,|\,X_t=1)=\frac{1}{2},\quad \P(X_{t+1}=2\,|\,X_t=2)=1
\end{equation}
for all $t=0,1,...$. Set $\delta(1)=\frac{3}{4}$, and we will choose $\delta(k)$, $k=2,3,\dotso$, later.

Suppose there exists an equilibrium $S\in\B(\X)$. Since $J(2,\rho(2,S))=\E^2[\delta(\rho(2,S))X_{\rho(2,S)}] = 2\delta(1)=3/2<2$, by Remark~\ref{rem:Theta} we have $2\in S$. On the other hand, a similar argument shows $1\notin S$. Indeed, if $1\in S$, then
$$J(1,\rho(1,S))=\E^1[\delta(1) X_1]=\frac34\left(\frac12\cdot 1+\frac12\cdot 2\right)=\frac{9}{8}>1,$$
a contradiction to Remark~\ref{rem:Theta}. However, with $1\notin S$, we have
$$J(1,\rho(1,S))=\E^1[\delta(\rho(1,S)) X_{\rho(1,S)}]=2\sum_{k=1}^\infty\left(\frac{1}{2}\right)^k\delta(k)=\frac{3}{4}+2\sum_{k=2}^\infty\left(\frac{1}{2}\right)^k\delta(k).$$
By choosing $\delta(k)$ decaying fast enough as $k$ increases, we can make $J(1,\rho(1,S))<1$, which again contradicts Remark~\ref{rem:Theta}. In short, we have shown that when the discount function $\delta(k)$ decays fast enough, there exists no equilibrium in this example. 
\end{example}

To ensure the existence of an equilibrium, we will assume that the discount function $\delta$ satisfies
\begin{equation}\label{DI}
\delta(i)\delta(j)\leq\delta(i+j),\quad \hbox{for all}\ i,j=0,1,\dotso.
\end{equation}
This condition is closely related to {\it decreasing impatience} ($DI$) in Behavioral Economics. It is well-known in empirical studies (see Remark~\ref{rem:evidence} below) that people admits $DI$: when choosing between two rewards, people are more willing to wait for the larger reward (more patient) when these two rewards are further away in time. For instance, in the two scenarios (i) getting \$100 today or \$110 tomorrow, and (ii) getting \$100 in 100 days or \$110 in 101 days, people tend to choose \$100 in (i), but \$110 in (ii).
Following \cite[Definition 1]{Prelec04} and \cite{Noor2009a}, the discount function $\delta$ induces $DI$ if
\begin{equation}\label{DI def}
\hbox{for any $j=1,2,\dotso$},\quad i\mapsto \frac{\delta(i+j)}{\delta(i)}\ \hbox{is strictly increasing.}
\end{equation}
Observe that \eqref{DI def} readily implies \eqref{DI}, as $\delta(i + j)/\delta(i)\ge \delta(j)/\delta(0) = \delta(j)$ for all $i\ge 0$ and $j\ge 1$. That is, \eqref{DI} is automatically true under $DI$. Note that \eqref{DI} is more general than $DI$, as it obviously includes the classical case $\delta(s) := \frac{1}{(1+\beta)^s}$, for any given $\beta>0$.

\begin{remark}\label{rem:evidence}
%Non-exponential discounting was first analyzed in \cite{Strotz55} and \cite{Pollak68}. These early theoretic studies later found 
There is substantial empirical evidence in Behavioral Economics (see e.g. \cite{Thaler81}, \cite{LT89}, \cite{LP92}) that individuals do not discount exponentially. %, but admits decreasing impatience (as explained below \eqref{DI}). 
To model empirical discounting, many discount functions have been proposed: in continuous time, hyperbolic in \cite{Ainslie-book-92}, \cite{Prelec04} ($\delta(t) := \frac{1}{1+\beta t}$ with $\beta>0$), generalized hyperbolic in \cite{LP92}, \cite{Laibson97} ($\delta(t) := \frac{1}{(1+\beta t)^k}$ with $\beta, k>0$), and pseudo-exponential in \cite{EL06}, \cite{karp2007non}, \cite{EP08} ($\delta(t) := \lambda e^{-\rho_1 t}+ (1-\lambda) e^{-\rho_2 t}$ with $\lambda\in (0,1)$ and $\rho_1,\rho_2>0$); in discrete time, quasi-hyperboic in \cite{Laibson97} ($\delta(0):=1$, $\delta(i) := \beta \rho^i$, with $\beta$, $\rho\in (0,1)$). Note that all these discount functions induce decreasing impatience, and thus satisfy \eqref{DI}.  

Non-exponential discounting beyond ``decreasing impatience'' has been studied much less. To the best of our knowledge, \cite{Hayek36} is the only work that seriously investigates discounting with ``increasing impatience''. It however suffers the criticism of lacking empirical relevance; see e.g. \cite{NP05}.
\end{remark}

\begin{remark}[Market-implied v.s. innate discounting]
Classically, one can use the price of a zero coupon bond as the discount factor. In this case, \eqref{DI} will imply that the forward rate should never exceed the spot rate, as long as the length of the corresponding time period is the same. This seems to be a strong assumption, as the forward rate is forged by market participants' expectations for future prices and need not have specific relations with the spot rate. 

The discounting discussed above is however {\it market-implied}, distinct from the context of this paper: we focus on an individual's {\it innate} time preference, regardless of other market participants, or whether there is a market to start with. This is the standard setup in Behavioral Economics, and a widespread finding is that individuals discount more intensely in near, than distant, future. Condition \eqref{DI} serves to summarize this feature of innate discounting.
%This is summarized in \eqref{DI}. %as explained below \eqref{DI} and Remark~\ref{rem:evidence}. 
Also, we stress that the payoff we discuss may not be monetary: the function $f$ may describe an agent's utility, level of health or happiness, etc. The connection to zero coupon bonds breaks down for non-monetary payoffs.
\end{remark}

\begin{lemma}\label{l1}
Suppose \eqref{DI} holds. For any nonempty $S\in\mathcal{B}(\X)$, if $\Theta(S)\subseteq S$, then 
\begin{equation}\label{e2}
J(x,\rho(x,S))\leq J\left(x,\rho(x,\Theta(S))\right)\quad\hbox{for all}\ x\in\X.
\end{equation}
In particular, this implies $\Theta^2(S)\subseteq\Theta(S)$. 
\end{lemma}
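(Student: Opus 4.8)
The plan is to exploit the key structural fact that $\Theta(S)\subseteq S$ forces the hitting time of the smaller region $\Theta(S)$ to be no earlier than that of $S$, and then to show that the discounted payoff does not decrease when we "wait longer and land in $\Theta(S)$". Fix $x\in\X$. Since $\Theta(S)\subseteq S$, every trajectory that enters $\Theta(S)$ has already entered $S$ (weakly earlier), so $\rho(x,S)\le\rho(x,\Theta(S))$ pointwise on $\Omega$. The natural approach is to condition on $\mathcal{F}_{\rho(x,S)}$: on the event $\{\rho(x,S)=\infty\}$ both payoffs contribute $0$, and on $\{\rho(x,S)<\infty\}$ we write $\rho(x,S)=s$, $X_s^x=y$ for some $y\in S$, and use the strong Markov property to reduce the comparison at time $s$ to a statement about the process restarted at $y$. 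The crucial observation is that $y\in S$ means $y\notin\X\setminus S$; but we actually want to use $y\in S\supseteq\Theta(S)$ together with the definition of $\Theta(S)$ — wait, we need $y\notin\Theta(S)$ in order to know $f(y)<J(y,\rho(y,S))$, which is false in general. So instead the comparison should be: either $y\in\Theta(S)$, in which case $\rho(x,\Theta(S))$ also stops at time $s$ and the two payoffs agree from that point, or $y\in S\setminus\Theta(S)$, in which case $y\notin\Theta(S)$ gives $f(y)<J(y,\rho(y,S))$.

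More precisely, here is the step-by-step plan. First, establish $\rho(x,S)\le\rho(x,\Theta(S))$ (immediate from $\Theta(S)\subseteq S$ and the definitions in \eqref{rho}). Second, decompose $\Omega$ according to the value and location of $X$ at time $\rho(x,S)$; by the strong Markov property,
\[
J(x,\rho(x,S)) = \E^x\!\left[\,\delta(\rho(x,S))\, f\!\left(X^x_{\rho(x,S)}\right)\mathbf{1}_{\{\rho(x,S)<\infty\}}\right],
\]
and on $\{\rho(x,S)=s,\ X^x_s=y\}$ the "future" contribution to $J(x,\rho(x,\Theta(S)))$ is $\delta(s)\,\delta\big(\rho(y,\Theta(S))\big)\,f\big(X^y_{\rho(y,\Theta(S))}\big)$ in distribution. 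So it suffices to show, for every $y\in S$,
\[
f(y)\ \le\ J\big(y,\,\widehat\rho(y)\big),\qquad\text{where } \widehat\rho(y):=\inf\{t\ge 0: X^y_t\in\Theta(S)\}.
\]
Third, prove this pointwise claim: if $y\in\Theta(S)$ then $\widehat\rho(y)=0$ and the right side is exactly $f(y)$, done; if $y\in S\setminus\Theta(S)$, then by Remark~\ref{rem:Theta}/the definition \eqref{Theta} we have $f(y)<J(y,\rho(y,S))$, and it remains to bound $J(y,\rho(y,S))$ by $J(y,\widehat\rho(y))$ — but note $\widehat\rho(y)$ starts hitting $\Theta(S)$ from $t\ge 0$ whereas $\rho(y,S)$ starts from $t\ge 1$, and again $\widehat\rho(y)\ge\rho(y,S)$ on $\{y\notin\Theta(S)\}$ is not automatic, so one iterates this same conditioning argument. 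This suggests the cleanest route is an induction/bootstrap: define $\widehat\rho_0 = \rho(x,S)$ and successively push the stopping time forward, showing each push does not decrease the payoff, and the limit is $\rho(x,\Theta(S))$.

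The single place where hypothesis \eqref{DI} enters — and the main obstacle — is exactly the step "waiting longer does not hurt": when we replace a stop at time $s$ (landing at $y$, with the option of stopping, worth $f(y)$) by the continued strategy $\rho(y,S)$ which stops at some later time $s+k$ landing at $z$, we are comparing $\delta(s)f(y)$ against $\delta(s)\cdot\E[\delta(k')f(z)\mid\ldots]$ only after we already know $f(y)<J(y,\rho(y,S))=\E^y[\delta(\rho(y,S))f(X_{\rho(y,S)})]$; but to transport that inequality forward by the factor $\delta(s)$ we need $\delta(s)\E^y[\delta(\rho(y,S))f(\cdots)]\le \E^y[\delta(s+\rho(y,S))f(\cdots)]$, which is precisely $\delta(s)\delta(k)\le\delta(s+k)$ — log-subadditivity. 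So the technical heart is: carefully set up the Markov conditioning so that each deferral of the stopping time multiplies an existing strict inequality $f<J$ by some $\delta(s)$, and invoke \eqref{DI} to absorb that factor into the discount of the deferred payoff. Once \eqref{e2} is proved, applying $\Theta$ to both sides and using that $\Theta$ is built from comparing $f(x)$ with $J(x,\rho(x,\cdot))$ — which by \eqref{e2} is larger for $\Theta(S)$ than for $S$ — yields $\Theta^2(S)=\Theta(\Theta(S))\subseteq\Theta(S)$: indeed $x\in\Theta^2(S)$ means $f(x)\ge J(x,\rho(x,\Theta(S)))\ge J(x,\rho(x,S))$, hence $x\in\Theta(S)$.
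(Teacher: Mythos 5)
Your core computation is the same as the paper's: condition at $\rho(x,S)$, note that on the event $X^x_{\rho(x,S)}\in\Theta(S)$ the two stopping times coincide, and on the complementary event use the strong Markov property, \eqref{DI} in the form $\delta(s+k)\ge\delta(s)\delta(k)$, and the defining inequality $f(y)<J(y,\rho(y,S))$ for $y\notin\Theta(S)$. You also correctly diagnose the circularity (the one-step reduction reproduces the inequality \eqref{e2} at the landing point $y$), and your deduction of $\Theta^2(S)\subseteq\Theta(S)$ from \eqref{e2} is fine. Where you genuinely diverge is in how the circularity is broken. The paper argues by contradiction: it sets $\alpha:=\sup_x\bigl[J(x,\rho(x,S))-J(x,\rho(x,\Theta(S)))\bigr]>0$, picks $y$ with deficit exceeding $\delta(1)\alpha$, and uses the one-step estimate to bound that same deficit by $\delta(1)\alpha$ --- a contraction argument exploiting $\delta(1)<1$. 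Your proposed bootstrap instead builds an increasing sequence of stopping times $\tau_0=\rho(x,S)\le\tau_1\le\cdots$ (each $\tau_{n+1}$ being the next visit to $S$ when $X_{\tau_n}\notin\Theta(S)$), shows $J(x,\tau_n)$ is nondecreasing, and passes to the limit. This is viable and arguably more constructive (it is close in spirit to the iteration the paper itself uses to prove Lemma~\ref{l2}), but two steps you leave implicit must be supplied: first, that $\tau_n\uparrow\rho(x,\Theta(S))$ --- this uses $\Theta(S)\subseteq S$ essentially, since every visit to $\Theta(S)$ is a visit to $S$, so the first entrance to $\Theta(S)$ is one of the $\tau_n$; second, the limit passage $J(x,\tau_n)\to J(x,\rho(x,\Theta(S)))$ via dominated convergence, using the boundedness of $f$, $\lim_k\delta(k)=0$, and the convention that the payoff vanishes on $\{\tau=\infty\}$. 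One small inaccuracy to fix: the future contribution to $J(x,\rho(x,\Theta(S)))$ on $\{\rho(x,S)=s,\,X^x_s=y\}$ equals $\delta(s+\rho(y,\Theta(S)))f(\cdot)$, not the product $\delta(s)\delta(\rho(y,\Theta(S)))f(\cdot)$; \eqref{DI} gives only the inequality between the two, which (as you note later) is exactly where the hypothesis enters.
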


\begin{proof}
By contradiction, suppose that \eqref{e2} does not hold. Then 
$$\alpha:=\sup_{x\in\X}\left[J(x,\rho(x,S))-J(x,\rho(x,\Theta(S))\right]>0.$$
Choose $y\in\X$ such that
\begin{equation}\label{e3}
J(y,\rho(y,S))-J(y,\rho(y,\Theta(S)))>\delta(1)\alpha.
\end{equation}
Consider the event $A:= A_1\cap A_2$, where
\[
A_1:= \left\{\omega\in\Omega: \rho(y,S)(\omega)<\infty\right\}\quad \hbox{and}\quad A_2:=\left\{\omega\in\Omega : X_{\rho(y,S)}^{y}(\omega)\in S\setminus\Theta(S)\right\}.
\]
On the event $(A_1)^c$, $\Theta(S)\subseteq S$ implies that $\rho(y,\Theta(S))\ge \rho(y,S)=\infty$. In view of the discussion below \eqref{J}, we have $\delta(\rho(y,S))f\left(X_{\rho(y,S)}\right)=\delta(\rho(y,\Theta(S)))f\left(X_{\rho(y,\Theta(S))}\right)=0$. On the event $A_1\cap(A_2)^c$, $\rho(y,S)<\infty$ and $\Theta(S)\subseteq S$ implies that $X_{\rho(y,S)}^{y}\in\Theta(S)$. It follows that $\rho(y,\Theta(S))=\rho(y,S)$, and thus $\delta(\rho(y,S))f\left(X_{\rho(y,S)}\right)=\delta(\rho(y,\Theta(S)))f\left(X_{\rho(y,\Theta(S))}\right)$. As a result, we have
\begin{align}
J&(y,\rho(y,S))-J(y,\rho(y,\Theta(S)))\nonumber\\
&=\E^y\left[1_{A}\ \left(\delta(\rho(y,S))f\left(X_{\rho(y,S)}\right)-\delta(\rho(y,\Theta(S)))f\left(X_{\rho(y,\Theta(S))}\right)\right)\right]\nonumber\\
&=\E^y\left[1_{A}\ \delta(\rho(y,S))\ \E^y\left[f\left(X_{\rho(y,S)}\right)-\frac{\delta(\rho(y,\Theta(S)))}{\delta(\rho(y,S))}f\left(X_{\rho(y,\Theta(S))}\right)\Big|\ \mathcal{F}_{\rho(y,S)}\right]\right]\nonumber\\
&= \E^y\bigg[1_{A}\ \delta(\rho(y,S))\ \bigg(f\left(X_{\rho(y,S)}\right)\nonumber\\
&\hspace{1.5in}-\E^y\bigg[\frac{\delta\left(\rho(y,S)+\rho(X_{\rho(y,S)},\Theta(S))\right)}{\delta(\rho(y,S))}f\left(X_{\rho(y,\Theta(S))}\right)\Big|\ \mathcal{F}_{\rho(y,S)}\bigg]\bigg)\bigg]\nonumber\\
&\le \E^y\left[1_{A}\ \delta(\rho(y,S))\ \left(f\left(X_{\rho(y,S)}\right)-\E^y\left[\delta\left(\rho(X_{\rho(y,S)},\Theta(S))\right)f\left(X_{\rho(y,\Theta(S))}\right)\Big|\ \mathcal{F}_{\rho(y,S)}\right]\right)\right],\label{l1-1}
\end{align}
where the inequality follows from \eqref{DI} and the nonnegativity of $f$. Thanks to the strong Markov property of $X$, it holds a.s. that
\begin{align*}
\E^y&\left[\delta\left(\rho(X_{\rho(y,S)},\Theta(S))\right)f\left(X_{\rho(y,\Theta(S))}\right)\Big|\ \mathcal{F}_{\rho(y,S)}\right] 1_A\\
&= \E^{X_{\rho(y,S)}}\left[\delta\left(\rho(X_{\rho(y,S)},\Theta(S))\right)f\left(X_{\rho(X_{\rho(y,S)},\Theta(S))}\right)\right] 1_A= J\left(X_{\rho(y,S)},\rho(X_{\rho(y,S)}, \Theta(S))\right) 1_A.
\end{align*}
This, together with \eqref{l1-1}, gives
\begin{align*}
J&(y,\rho(y,S))-J(y,\rho(y,\Theta(S)))\nonumber\\
&\le \E^y\left[1_{A}\ \delta(\rho(y,S)) \left\{f\left(X_{\rho(y,S)}\right)- J\left(X_{\rho(y,S)},\rho(X_{\rho(y,S)}, \Theta(S))\right)  \right\}\right].
\end{align*}
From the definition of $\Theta$, we have $f(x)<J(x,\rho(x,S))$ for all $x\notin\Theta(S)$. Thus, from the definition of $A$, the above inequality implies 
\begin{align*}
J&(y,\rho(y,S))-J(y,\rho(y,\Theta(S)))\nonumber\\
&\le \E^y\left[1_{A}\ \delta(\rho(y,S)) \left\{J\left(X_{\rho(y,S)},\rho(X_{\rho(y,S)},S)\right)- J\left(X_{\rho(y,S)},\rho(X_{\rho(y,S)}, \Theta(S))\right)  \right\}\right]\\
&\le \E^y\left[1_{A}\ \delta(\rho(y,S)) \alpha \right] \le \delta(1)\alpha. 
\end{align*}
This however contradicts \eqref{e3}. Hence, \eqref{e2} is established.

By Remark~\ref{rem:nonempty}, $\Theta(S)\neq\emptyset$. 
%Now, suppose that $\Theta(S)=\emptyset$. Then, for any $x\in\X$, $\rho(x,\Theta(S))=\infty$ and thus $J\left(x,\rho(x,\Theta(S))\right)=0$, in view of the discussion below \eqref{J}. Using the definition of $\Theta(S)$ and \eqref{e2}, we have 
%$$f(x)<J(x,\rho(x,S))\leq J\left(x,\rho(x,\Theta(S))\right)=0,$$
%which contradicts the nonnegativity of $f$. Hence, $\Theta(S)\neq\emptyset$. 
For any $x\notin\Theta(S)$, the definition of $\Theta$ and \eqref{e2} yield $f(x)<J(x,\rho(x,S))\leq J\left(x,\rho(x,\Theta(S))\right)$, which implies $x\notin\Theta^2(S)$. It follows that $\Theta^2(S)\subseteq\Theta(S)$.
\end{proof}

We are ready to present the main convergence result for our iterative approach.

\begin{theorem}\label{t1}
Suppose \eqref{DI} holds. For any nonempty $S\in\mathcal{B}(\X)$ such that $\Theta(S)\subseteq S$, 
\begin{equation}\label{Theta^infty}
S_\infty:=\bigcap_{n=1}^\infty\Theta^n (S)\neq\emptyset
\end{equation}
is an equilibrium.
\end{theorem}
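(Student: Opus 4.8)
The plan is to establish the two assertions separately: first that $S_\infty\neq\emptyset$, then that $\Theta(S_\infty)=S_\infty$. The first is quick. Since $S$ is nonempty with $\Theta(S)\subseteq S$, Lemma~\ref{l1} gives $\Theta^2(S)\subseteq\Theta(S)$; since $\Theta(S)$ is itself nonempty by Remark~\ref{rem:nonempty} and satisfies $\Theta(\Theta(S))\subseteq\Theta(S)$, a further application of Lemma~\ref{l1} gives $\Theta^3(S)\subseteq\Theta^2(S)$, and inductively the sequence $\{\Theta^n(S)\}_{n\ge0}$ is nonincreasing. By Remark~\ref{rem:nonempty} each $\Theta^n(S)$ with $n\ge1$ contains the nonempty set $E$ of \eqref{E}, so $S_\infty\supseteq E\neq\emptyset$.

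The core of the proof is the monotone convergence
\[
J\big(x,\rho(x,\Theta^n(S))\big)\ \uparrow\ J\big(x,\rho(x,S_\infty)\big)\qquad\text{as }n\to\infty,\ \text{for every }x\in\X.
\]
Monotonicity in $n$ follows by applying the first conclusion \eqref{e2} of Lemma~\ref{l1} in succession to $S,\Theta(S),\Theta^2(S),\dots$. To identify the limit I plan to argue pathwise. Because $\Theta^n(S)\downarrow S_\infty$, the integer-valued $\rho(x,\Theta^n(S))$ are nondecreasing in $n$ and dominated by $\rho(x,S_\infty)$. For a.e.\ $\omega$: if $X^x$ reaches $S_\infty$ at a finite time $s:=\rho(x,S_\infty)(\omega)$, then $X^x_s(\omega)\in\Theta^n(S)$ for all $n$, so $\rho(x,\Theta^n(S))(\omega)$ is a bounded nondecreasing integer sequence, eventually equal to some $s'\le s$; but then $X^x_{s'}(\omega)\in\Theta^n(S)$ for all large $n$, and since these sets are nested decreasing, $X^x_{s'}(\omega)\in S_\infty$, forcing $s'=s$. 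If instead $\rho(x,S_\infty)(\omega)=\infty$, the same observation rules out an eventually-constant finite value of $\rho(x,\Theta^n(S))(\omega)$, so $\rho(x,\Theta^n(S))(\omega)\to\infty$ and the integrand tends to $0$, matching the convention on $\{\rho=\infty\}$. In every case $\delta(\rho(x,\Theta^n(S)))f(X^x_{\rho(x,\Theta^n(S))})$ converges a.s.\ to $\delta(\rho(x,S_\infty))f(X^x_{\rho(x,S_\infty)})$, and since $f$ is bounded, dominated convergence yields the displayed limit.

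Granting this, the fixed-point property follows by unwinding definitions. For $n\ge1$ one has $x\in\Theta^n(S)\iff f(x)\ge J(x,\rho(x,\Theta^{n-1}(S)))$, hence $x\in S_\infty\iff f(x)\ge J(x,\rho(x,\Theta^m(S)))$ for all $m\ge0$. If $x\in S_\infty$, sending $m\to\infty$ and using the limit above gives $f(x)\ge J(x,\rho(x,S_\infty))$, i.e.\ $x\in\Theta(S_\infty)$. Conversely, if $x\in\Theta(S_\infty)$ then $f(x)\ge J(x,\rho(x,S_\infty))=\sup_{m\ge0}J(x,\rho(x,\Theta^m(S)))\ge J(x,\rho(x,\Theta^m(S)))$ for every $m$, so $x\in S_\infty$. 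Thus $\Theta(S_\infty)=S_\infty$.

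The one genuinely delicate point is the pathwise identification of $\lim_n\rho(x,\Theta^n(S))$ with $\rho(x,S_\infty)$ — in particular the bookkeeping that an eventually-constant finite value of $\rho(x,\Theta^n(S))$ must belong to $S_\infty$, which relies on the sets $\Theta^n(S)$ being nested decreasing (itself a consequence of Lemma~\ref{l1}). Everything else is monotone/dominated convergence together with the definition of $\Theta$.
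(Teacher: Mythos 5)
Your proposal is correct and follows essentially the same route as the paper's proof: both hinge on the nested decrease of $\Theta^n(S)$ from Lemma~\ref{l1}, the pathwise identification $\rho(x,\Theta^n(S))\uparrow\rho(x,S_\infty)$ (the paper's \eqref{e4}), and dominated convergence, followed by unwinding the definition of $\Theta$. The only (immaterial) difference is in the inclusion $\Theta(S_\infty)\subseteq S_\infty$, which the paper obtains by contraposition via the chain $f(x)<J(x,\rho(x,\Theta^{N-1}(S)))\le J(x,\rho(x,S_\infty))$ using \eqref{e2}, whereas you argue directly from $J(x,\rho(x,S_\infty))=\sup_m J(x,\rho(x,\Theta^m(S)))$ --- both rest on the same monotone limit.
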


\begin{proof}
Note that $S\in\mathcal{B}(\X)$ implies $\Theta(S)\in\mathcal{B}(\X)$, which in turn yields $\Theta^n(S)\in\mathcal{B}(\X)$ for all $n\in\N$. Therefore, $S_\infty\in\mathcal{B}(\X)$ by definition. We claim that for all $x\in\X$ and $\omega\in\Omega$,
\begin{equation}\label{e4}
\rho(x,S_\infty)(\omega)=\lim_{n\rightarrow\infty}\rho(x,\Theta^n(S))(\omega).%\quad\hbox{for all}\ \omega\in\Omega.
\end{equation}
Fix $x\in\X$ and $\omega\in\Omega$. By Lemma \ref{l1}, $\{\Theta^n(S)\}_{n\in\N}$ is a nonincreasing sequence of nonempty Borel sets, and thus $\rho(x,\Theta^n(S))(\omega)$ is by definition nondecreasing in $n$. The limit on the right hand side of \eqref{e4} is therefore well-defined, and
$$\rho(x,S_\infty)(\omega)\geq\lim_{n\rightarrow\infty}\rho(x,\Theta^n(S))(\omega)=:\beta(\omega).$$
If $\beta(\omega)=\infty$, then \eqref{e4} trivially holds. If $\beta(\omega)\in\N$, then there exists $N\in\N$ large enough such that $\rho(x,\Theta^n(S))(\omega)=\beta(\omega)$ for all $n\ge N$. This implies, by \eqref{rho}, that $X_\beta^{x}(\omega)\in\Theta^n(S)$ for all $n\ge N$. Hence, $X_\beta^{x}(\omega)\in\bigcap_{n=1}^\infty\Theta^n(S)=S_\infty$. It follows that $\rho(x,S_\infty)(\omega)\leq\beta(\omega)$. We thus conclude that \eqref{e4} holds.

By Remark~\ref{rem:nonempty}, $\Theta^n(S) \supseteq E$ for all $n\in\N$, with $E\neq\emptyset$ defined in \eqref{E}. Thus, $S_\infty\supseteq E$ by definition, and is thus nonempty. 
%Now, assume to the contrary that $S_\infty=\emptyset$. For any $x\in\X$, we have $\rho(x,S_\infty)=\infty$ and thus $J\left(x,\rho(x,S_\infty)\right)=0$, in view of the discussion below \eqref{J}. Moreover, since $\{\Theta^n(S)\}_{n\in\N}$ is a nonincreasing sequence, the definition of $S_\infty$ implies the existence of $N\in\N$ large enough such that for all $n\ge N$, $x\notin \Theta^n(S)$ and thus $f(x)<J\left(x,\rho(x,\Theta^{n-1}(S))\right)$. It follows that 
%\begin{equation}\label{e6}
%\begin{split}
%f(x)<J\left(x,\rho(x,\Theta^{N-1}(S))\right)&\leq\lim_{n\rightarrow\infty}J\left(x,\rho(x,\Theta^{n-1}(S))\right)\\
%&\hspace{-0in}=\lim_{n\rightarrow\infty} \E^x\left[\delta\left(\rho(x,\Theta^{n-1}(S))\right) f\left(X_{\rho(x,\Theta^{n-1}(S))}\right)\right]\\
%&\hspace{-0in}= \E^x\left[\delta\left(\rho(x,S_{\infty})\right) f\left(X_{\rho(x,S_{\infty})}\right)\right]
%=J\left(x,\rho(x,S_\infty)\right)=0,
%\end{split}
%\end{equation}
%where the second inequality follows from \eqref{e2} and the third line is due to the dominated convergence theorem and \eqref{e4}. This however contradicts the nonnegativity of $f$, and we thus obatin $S_\infty\neq\emptyset$.
For any $x\in S_\infty$, since $x\in\Theta^n(S)$ for all $n\in\N$, we have $f(x)\geq J\left(x,\rho(x,\Theta^{n-1}(S))\right)$ for all $n\in\N$. As $n\to\infty$, 
\begin{align*}%\label{e5}
f(x)&\geq\lim_{n\rightarrow\infty} J\left(x,\rho(x,\Theta^{n-1}(S))\right)\\
&\hspace{-0in}=\lim_{n\rightarrow\infty} \E^x\left[\delta\left(\rho(x,\Theta^{n-1}(S))\right) f\left(X_{\rho(x,\Theta^{n-1}(S))}\right)\right]\\
&\hspace{-0in}= \E^x\left[\delta\left(\rho(x,S_{\infty})\right) f\left(X_{\rho(x,S_{\infty})}\right)\right]
=J\left(x,\rho(x,S_\infty)\right)
\end{align*}
where the third line follows from the dominated convergence theorem and \eqref{e4}. Thus, $x\in\Theta(S_\infty)$, and we conclude that $S_\infty\subseteq \Theta(S_\infty)$. On the other hand, for $x\notin S_\infty$, there exists $N\in\N$ such that $x\notin\Theta^n(S)$ for all $n\geq N$. This implies $f(x)<J\left(x,\rho(x,\Theta^{n-1}(S))\right)$ for all $n\geq N$. It follows that
\begin{equation*}
f(x)<J(x,\rho(x,\Theta^{N-1}(S))\leq\lim_{n\rightarrow\infty}J\left(x,\rho(x,\Theta^{n-1}(S))\right)=J\left(x,\rho(x,S_\infty)\right),
\end{equation*}
where the second inequality is due to \eqref{e2} and the last equality follows again from the dominated convergence theorem and \eqref{e4}.
This implies $x\notin \Theta(S_\infty)$, and we thus conclude that $\Theta(S_\infty)\subseteq S_\infty$. Hence, we have obtained $\Theta(S_\infty)=S_\infty$, i.e. $S_\infty$ is an equilibrium.  
\end{proof}

\begin{remark}
The iterative approach embodies the hierarchy of strategic reasoning in Game Theory, introduced in \cite{Stahl93} and \cite{SW94}. For any %stopping region 
$S\in\B(\X)$, each application of $\Theta$ to $S$ represents an additional level of strategic reasoning. Specifically, $\Theta^n(S)$ corresponds to level-$n$ strategic reasoning in \cite{SW94}, and $S_\infty = \bigcap_{n\in\N}\Theta^n(S)$ reflects full rationality of ``smart$_\infty$'' players in \cite{Stahl93}.
\end{remark}

Theorem~\ref{t1} immediately gives the existence of at least one equilibrium. 

\begin{corollary}\label{coro:existence of equi}
Suppose \eqref{DI} holds. Then, $\X_\infty$, defined as in \eqref{Theta^infty}, is an equilibrium. 
\end{corollary}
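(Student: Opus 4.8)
The plan is to obtain this as an immediate specialization of Theorem~\ref{t1} by taking the initial stopping region to be the whole space $S=\X$. First I would check that $\X$ qualifies as an admissible starting point for the iteration: it is clearly a Borel set, and it is nonempty — this follows from Remark~\ref{rem:nonempty}, since $\emptyset\neq E\subseteq\Theta(\X)\subseteq\X$ (and in any case $\X$ must be nonempty as it carries the Markov process $X$).

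Next I would verify the contraction-in-the-first-step hypothesis $\Theta(S)\subseteq S$ of Theorem~\ref{t1}. With $S=\X$ this is automatic: $\Theta$ is an operator mapping $\B(\X)$ into itself, so $\Theta(\X)\subseteq\X$ trivially holds. Hence both hypotheses of Theorem~\ref{t1} are met with $S=\X$.

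Finally, invoking Theorem~\ref{t1} with this choice of $S$, I conclude that
\[
\X_\infty:=\bigcap_{n=1}^\infty\Theta^n(\X)
\]
is nonempty and satisfies $\Theta(\X_\infty)=\X_\infty$, i.e. it is an equilibrium in the sense of Definition~\ref{def:equilibrium}. This matches exactly the notation $\X_\infty$ introduced via \eqref{Theta^infty}, so the corollary is proved.

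There is essentially no obstacle here — the only points requiring a word of justification are the nonemptiness of $\X$ (handled by $E\neq\emptyset$ from Remark~\ref{rem:nonempty}) and the trivial inclusion $\Theta(\X)\subseteq\X$; the substantive work has already been done in Theorem~\ref{t1}, whose proof in turn rests on the monotonicity in Lemma~\ref{l1} supplied by the log sub-additivity condition \eqref{DI}.
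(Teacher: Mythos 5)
Your proof is correct and follows exactly the paper's own route: the paper likewise observes that $\Theta(\X)\subseteq\X$ holds trivially and invokes Theorem~\ref{t1} with $S=\X$. The extra remarks on nonemptiness of $\X$ via $E\neq\emptyset$ are harmless but not needed.
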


\begin{proof}
Since $\X$ trivially satisfies $\Theta(\X)\subseteq \X$, the result follows from Theorem~\ref{t1} directly. 
\end{proof}

In general, the uniqueness of equilibria does not hold, as the next example demonstrates. 

\begin{example}\label{eg:multiple equilibriums}
Consider $\X=\{1,2\}$, $f(x)=x$, and let the transition probabilities be given as in \eqref{tran prob}. For any $\eps>0$, take
$$\delta(k) := \frac{7}{12}(1-\eps)^k,\quad k=1,2,\dotso.$$
It can be easily verified that \eqref{DI} is satisfied. When $\eps>0$ is small enough, we claim that $S:=\{2\}$ and $\X=\{1,2\}$ are both equilibria. Indeed, by similar calculations in Example~\ref{eg:equili may not exist},
\begin{align*}
J(1,\rho(1,S))&=\E^1[\delta(\rho(1,S)) X_{\rho(1,S)}]=2\sum_{k=1}^\infty\left(\frac{1}{2}\right)^k\delta(k)=\frac{7}{6}\sum_{k=1}^\infty\left(\frac{1-\eps}{2}\right)^k,\\
J(2,\rho(2,S))&=\E^2[\delta(\rho(2,S))X_{\rho(2,S)}] = 2\delta(1)=\frac76 (1-\eps)<2.
\end{align*}
As $\eps\downarrow 0$, $J(1,\rho(1,S))$ approaches $7/6$ and thus strictly larger than $1$. Hence, by Remark~\ref{rem:Theta}, $S=\{2\}$ is an equilibrium as $\eps>0$ small enough. On the other hand,
\begin{align*}
J(1,\rho(1,\X))&=\delta(1)\E^1[X_{\rho(1,\X)}]= \frac78(1-\eps)<1,\quad J(2,\rho(2,\X))=\delta(1) 2 = \frac76 (1-\eps)<2.
\end{align*} 
Thus, by Remark~\ref{rem:Theta}, $\X=\{1,2\}$ is an equilibrium for any $\eps>0$.
\end{example}

%The next result shows that there is a common region covered by every equilibrium. 

%\begin{proposition}\label{prop:equi nonempty}
%Let $S\in\B(\X)$ be an equilibrium. Then $S\neq\emptyset$. In fact, 
%\[
%S \supseteq \{x\in\X : f(x) > M\delta(1)\} \neq\emptyset,
%\] 
%where $M:= \sup_{x\in\X} f(x)<\infty$.
%\end{proposition}

%\begin{proof}
%Suppose $S=\emptyset$. Then, for any $x\in\X$, $\rho(x,S)=\infty$ and thus $J(x,\rho(x,S))=0$, in view of the discussion below \eqref{J}. By Remark~\ref{rem:Theta}, $f(x)<J(x,\rho(x,S)) =0$, which contradicts the nonnegativity of $f$.
%
%By \eqref{J}, $J(x,\rho(x,S))\le \delta(1) M < M$. The definition of $M$ ensures that $E:=\{x\in\X : f(x) > M\delta(1)\}$ is nonempty. Moreover, for any $x\in E$, $f(x)> M \delta(1)\ge J(x,\rho(x,S))$, and thus $x\in S$.    
%\end{proof}

%%%%%%%%%%%%%%%%%%%%%%%%%%%%%%%%%%%%%%%%%%%%%%
%%%%%%%%%%%%%%%%%%%%%%%%%%%%%%%%%%%%%%%%%%%%%%

\section{The Optimal Equilibrium}\label{sec:optimal}

Since there may be multiple equilibria under infinite horizon (see Example~\ref{eg:multiple equilibriums}), how to select the appropriate one is a crucial, unsettled question. In this section, we will first define {\it optimality} of an equilibrium: an equilibrium is optimal if it generates larger value than any other equilibrium does {\it at all times}. While this seems to be a rather strong condition, we show that a unique optimal equilibrium does exist in Theorems~\ref{t2} and \ref{t3}, under appropriate continuity assumptions. 

For any $S\in\cE$, the associated value function is defined by
\[
V(x,S):=f(x)\vee J(x,\rho(x,S))\quad \hbox{for all}\ x\in\X.
\]
By Remark~\ref{rem:Theta}, we immediately have
\begin{equation}\label{V1}
V(x,S) = 
\begin{cases}
f(x),\quad &\hbox{if}\ x\in S,\\
J\left(x,\rho(x,S)\right),\quad &\hbox{if}\ x\in \X\setminus S.
\end{cases}
\end{equation}
This in turn implies that $V$ can be expressed as
\begin{equation}\label{V2}
V(x,S)=J(x,\rho^*(x,S)),\quad \hbox{with}\ \ \rho^*(x,S):=\inf\{t\geq 0:\ X_t^{x}\in S\}.
\end{equation}

\begin{definition}[Optimal Equilibrium]
We say $S^*\in\cE$ is an optimal equilibrium if for any $S\in\cE$,
$$V(x,S^*)\geq V(x,S)\quad\forall\,x\in\X.$$
\end{definition}

The next result shows that if an optimal equilibrium exists, it must be unique.

\begin{proposition}\label{prop:equi unique}
If $S^*\in\cE$ is an optimal equilibrium, then
$
S^* = \bigcap_{S\in\cE} S. 
$
%it must be contained in any other equilibrium, and thus unique.
\end{proposition}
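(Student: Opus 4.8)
The plan is to show both inclusions $S^* \subseteq \bigcap_{S\in\cE}S$ and $\bigcap_{S\in\cE}S \subseteq S^*$, exploiting the characterization \eqref{V1}--\eqref{V2} of the value function together with the defining property of an optimal equilibrium. The first inclusion is the substantive one; the second is nearly immediate.

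For the inclusion $S^* \subseteq \bigcap_{S\in\cE}S$, I would argue by contradiction: suppose there is a point $x_0 \in S^*$ and some equilibrium $S\in\cE$ with $x_0 \notin S$. Since $x_0\in S^*$, \eqref{V1} gives $V(x_0,S^*)=f(x_0)$. Since $x_0\notin S$, \eqref{V1} gives $V(x_0,S)=J(x_0,\rho(x_0,S))$, and because $S$ is an equilibrium, Remark~\ref{rem:Theta} tells us $f(x_0)<J(x_0,\rho(x_0,S))$, i.e. $V(x_0,S^*)=f(x_0)<J(x_0,\rho(x_0,S))=V(x_0,S)$. This contradicts the optimality of $S^*$, which requires $V(x_0,S^*)\ge V(x_0,S)$. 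Hence no such $x_0$ exists, and $S^*\subseteq S$ for every $S\in\cE$, so $S^*\subseteq\bigcap_{S\in\cE}S$.

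For the reverse inclusion, note that $S^*$ is itself an element of $\cE$ (it is an optimal equilibrium, hence in particular an equilibrium), so $\bigcap_{S\in\cE}S \subseteq S^*$ trivially. Combining the two inclusions yields $S^* = \bigcap_{S\in\cE}S$.

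I do not anticipate a real obstacle here, since the argument rests entirely on the dichotomy in \eqref{V1} (equivalently Remark~\ref{rem:Theta}): inside an equilibrium the value equals the immediate payoff $f$, and outside it strictly exceeds $f$. The one point to be careful about is that the definition of optimal equilibrium only compares $V(\cdot,S^*)$ against $V(\cdot,S)$ for $S\in\cE$, so I must make sure the equilibrium $S$ I pick in the contradiction step is a genuine equilibrium — which it is by hypothesis — and that the strict inequality outside $S$ comes from $S$ being a fixed point of $\Theta$, not merely a Borel stopping region. No continuity assumptions are needed for this proposition; it is a purely order-theoretic consequence of the definitions.
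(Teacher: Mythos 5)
Your proof is correct and follows essentially the same route as the paper: both establish $S^*\subseteq\bigcap_{S\in\cE}S$ by contradiction, using \eqref{V1} and Remark~\ref{rem:Theta} to get $V(x,S^*)=f(x)<J(x,\rho(x,S))=V(x,S)$ at a point $x\in S^*\setminus S$, contradicting optimality, while the reverse inclusion is immediate since $S^*\in\cE$. No issues.
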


\begin{proof}
We only need to show $S^*\subseteq \bigcap_{S\in\cE} S$, as the other inclusion is trivial. By contradiction, suppose there exists $S\in\cE$ such that $S^*\not\subseteq S$. For any $x\in S^*\setminus S$, we deduce from \eqref{V1} and Remark~\ref{rem:Theta} that
$$V(x,S^*)=f(x)<J(x,\rho(x,S))=V(x,S),$$
which contradicts the optimality of $S^*$. %Hence, $S^*\subseteq S$.
\end{proof}

\begin{lemma}\label{l2}
Suppose \eqref{DI} holds. For any $S$, $T\in\cE$, 
\begin{equation}\label{value increased}
J\left(x,\rho(x,S\cap T)\right)\geq J(x,\rho(x,S))\vee J(x,\rho(x,T))\ \quad\forall\,x\in\X.
\end{equation}
In particular, this implies $\Theta(S\cap T)\subset S\cap T$.
\end{lemma}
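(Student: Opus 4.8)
The plan is to mimic the structure of the proof of \lemref{l1}, replacing the single set $S$ and its image $\Theta(S)$ by the pair $S\cap T$ and, say, $S$ (and symmetrically $T$). Concretely, I would argue by contradiction: suppose \eqref{value increased} fails, so that $\alpha := \sup_{x\in\X}\left[J(x,\rho(x,S)) \vee J(x,\rho(x,T)) - J(x,\rho(x,S\cap T))\right] > 0$. By symmetry assume the supremum is witnessed (up to a factor $\delta(1)$) by the $S$-term: pick $y\in\X$ with $J(y,\rho(y,S)) - J(y,\rho(y,S\cap T)) > \delta(1)\alpha$. Since $S,T\in\cE$, Lemma~\ref{l1} (or rather the fact that $\Theta(S)=S$, $\Theta(T)=T$) tells us $S\cap T\subseteq S$, so $\rho(y,S\cap T)\ge\rho(y,S)$, and the two stopping times agree except on the event $A := \{\rho(y,S)<\infty\}\cap\{X_{\rho(y,S)}^y\in S\setminus(S\cap T)\} = \{\rho(y,S)<\infty\}\cap\{X_{\rho(y,S)}^y\in S\setminus T\}$.

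On $A^c$ the two payoff terms coincide (either both vanish when $\rho(y,S)=\infty$, or $X_{\rho(y,S)}^y\in S\cap T$ forces $\rho(y,S\cap T)=\rho(y,S)$), exactly as in \lemref{l1}. On $A$, I would condition on $\F_{\rho(y,S)}$, use the decomposition $\rho(y,S\cap T) = \rho(y,S) + \rho\big(X^y_{\rho(y,S)}, S\cap T\big)$ that holds on $A$ by the strong Markov property, and invoke \eqref{DI} together with nonnegativity of $f$ to peel off the $\delta\big(\rho(y,S)\big)$ factor:
\[
J(y,\rho(y,S)) - J(y,\rho(y,S\cap T)) \le \E^y\Big[1_A\,\delta(\rho(y,S))\Big(f\big(X_{\rho(y,S)}^y\big) - J\big(X_{\rho(y,S)}^y,\rho(X_{\rho(y,S)}^y,S\cap T)\big)\Big)\Big].
\]
Now comes the point where the argument diverges slightly from \lemref{l1}: on $A$ we have $X_{\rho(y,S)}^y\in S$, and since $S\in\cE$, Remark~\ref{rem:Theta} gives $f\big(X_{\rho(y,S)}^y\big) \ge J\big(X_{\rho(y,S)}^y,\rho(X_{\rho(y,S)}^y,S)\big)$ --- wait, that is the wrong direction for a contradiction. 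Instead I should use that on $A$ the point lies in $S\setminus T$, hence $X_{\rho(y,S)}^y\notin T$, and $T\in\cE$ gives (again Remark~\ref{rem:Theta}) $f\big(X_{\rho(y,S)}^y\big) < J\big(X_{\rho(y,S)}^y,\rho(X_{\rho(y,S)}^y,T)\big)$. Substituting, the right-hand side is at most $\E^y\big[1_A\,\delta(\rho(y,S))\big(J(X_{\rho(y,S)}^y,\rho(\cdot,T)) - J(X_{\rho(y,S)}^y,\rho(\cdot,S\cap T))\big)\big] \le \E^y[1_A\,\delta(\rho(y,S))\,\alpha] \le \delta(1)\alpha$, where the middle inequality uses the definition of $\alpha$ (with the $T$-term this time) and that $\delta(\rho(y,S))\le\delta(1)$ on $A$. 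This contradicts the choice of $y$, proving \eqref{value increased}.

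For the last sentence: \eqref{value increased} says in particular $J(x,\rho(x,S\cap T)) \ge J(x,\rho(x,S))$ for all $x$. If $x\notin S\cap T$, then WLOG $x\notin S$, so $f(x) < J(x,\rho(x,S)) \le J(x,\rho(x,S\cap T))$ (the first inequality from $S\in\cE$, the second from \eqref{value increased}), whence $x\notin\Theta(S\cap T)$; thus $\Theta(S\cap T)\subseteq S\cap T$. The main obstacle I anticipate is bookkeeping around the symmetric case --- making sure that whichever of $J(y,\rho(y,S))$ or $J(y,\rho(y,T))$ achieves the near-supremum, the point $X^y_{\rho(y,\cdot)}$ falls outside the \emph{other} equilibrium so that Remark~\ref{rem:Theta} can be applied in the direction that produces a contradiction; the measure-theoretic manipulations (conditioning, strong Markov, dominated/monotone convergence for the terminal $\delta$-$f$ convention) are routine copies of \lemref{l1}.
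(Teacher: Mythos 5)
Your proof is correct, but it takes a genuinely different route from the paper's. The paper proves \eqref{value increased} \emph{directly}, by unrolling an alternating sequence of hitting times $\tau_1=\rho(x,T)$, $\tau_2=\rho(y_1,S)$, $\tau_3=\rho(y_2,T),\dots$, showing at each stage that on the event where the process has not yet entered $S\cap T$ the landing point lies outside the \emph{other} equilibrium (so Remark~\ref{rem:Theta} applies in the useful direction), and telescoping to
$J(x,S\cap T)-J(x,T)\ge \mathfrak{E}_1\circ\cdots\circ\mathfrak{E}_{2n}[J(y_{2n},S\cap T)-J(y_{2n},T)]$, whose right-hand side is killed by the geometric bound $\delta(1)^{2n}\cdot 2C\to 0$. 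You instead transplant the contradiction/supremum scheme of Lemma~\ref{l1}: the single constant $\alpha$, defined with the maximum $J(\cdot,\rho(\cdot,S))\vee J(\cdot,\rho(\cdot,T))$, absorbs all further iterations at once and contracts by the factor $\delta(1)$ in one step. The two arguments share the same essential insight — after exiting via $S$ the process lands in $S\setminus T$, so it is the equilibrium property of $T$ (not $S$) that gives $f(z)<J(z,\rho(z,T))$ — and your observation that $\alpha$ must be defined with the $\vee$ so that the resulting $T$-difference at the landing point is still dominated by the same $\alpha$ is exactly the point that makes the shortcut close up; it plays the role of the paper's alternation between $S$ and $T$. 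Your version is more economical (no induction, no explicit error estimate), while the paper's is more constructive and makes the geometric decay visible; both rely on $\delta(1)<1$, on the identity $\rho(y,S\cap T)=\rho(y,S)+\rho(X^y_{\rho(y,S)},S\cap T)$ on $A$, and (implicitly) on $S\cap T\supseteq E\neq\emptyset$ from Remark~\ref{rem:equi nonempty}. Your derivation of $\Theta(S\cap T)\subseteq S\cap T$ from \eqref{value increased} coincides with the paper's.
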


\begin{remark}\label{r1}
This lemma provides a way to construct a ``better'' equilibrium from old equilibria. For any $S$, $T\in\cE$, % be such that no one is included in the other. 
Remark~\ref{rem:equi nonempty} and Lemma~\ref{l2} imply that $S\cap T\neq\emptyset$ and $\Theta(S\cap T)\subseteq S\cap T$. Thus, we can apply Theorem~\ref{t1} to get a new equilibrium $(S\cap T)_\infty$ through the iterative approach. This new equilibrium is %strictly 
better than $S$ and $T$ in the sense that
$$V(x,(S\cap T)_\infty)\geq V(x,S)\vee V(x,T)\quad \forall x\in\X,$$
which is a consequence of \eqref{value increased} and \eqref{e2}.
%and there exists some $x,y\in\Omega$ such that
%$$V(x,(S\cap T)_\infty)>V(x,S)\quad\text{and}\quad V(y,(S\cap T)_\infty)>V(y,T).$$
\end{remark}

\begin{proof}[Proof of Lemma \ref{l2}]
Fix $x\in\X$. Throughout this proof, for simplicity of notation, we define, for all $n=0,1,\dotso$, the following:
\begin{itemize}
\item [1.] $y_0:=x$,\quad $y_{2n+1}:=X_{\rho(y_{2n},T)}^{y_{2n}}$,\quad $y_{2n+2}:=X_{\rho(y_{2n+1},S)}^{y_{2n+1}}$;
\item [2.] $\tau_0:=0$,\quad $\tau_{2n+1}:=\rho(y_{2n},T)$,\quad $\tau_{2n+2}:=\rho(y_{2n+1},S)$;
\item [3.] $A_n:=\{\omega\in\Omega :\ \tau_n(\omega)<\infty\ \hbox{and}\ y_n(\omega)\notin S\cap T\}$;
\item [4.] $\mathfrak{E}_n[Z]:=\E^{y_{n-1}}\left[1_{A_n}\delta(\tau_n)\,Z\,\right]$, for any random variable $Z:\Omega\mapsto\R$; 
\item [5.] $J(x,S') := J(x,\rho(x,S'))$, for any $S'\in\B(\X)$. 
\end{itemize}
By the definition of $A_1$, we have 
\begin{align}
\notag J(x,S\cap T)-J(x,T) &=\E^{y_0}\left[1_{A_1}\left(\delta(\rho(y_0,S\cap T))f\left(X_{\rho(y_0,S\cap T)}\right)-\delta(\tau_1)f\left(y_1\right)\right)\right]\\
\notag &=\E^{y_0}\left[1_{A_1}\delta(\tau_1)\E^{y_0}\left[\left(\frac{\delta(\rho(y_0,S\cap T))}{\delta(\tau_1)}f\left(X_{\rho(y_0,S\cap T)}\right)-f\left(y_1\right)\right)\Big|\ \mathcal{F}_{\tau_1}\right]\right]\\
&\geq \E^{y_0}\left[1_{A_1}\delta(\tau_1)\E^{y_0}\left[\left(\delta(\rho(y_1,S\cap T))f\left(X_{\rho(y_0,S\cap T)}\right)-f\left(y_1\right)\right)\Big|\ \mathcal{F}_{\tau_1}\right]\right],\label{J-J}
\end{align}
where the inequality follows from \eqref{DI} and $f$ being nonnegative. The strong Markov property of $X$ implies that
\begin{align}
&\E^{y_0}\left[\delta(\rho(y_1,S\cap T))f\left(X_{\rho(y_0,S\cap T)}\right)\Big|\ \mathcal{F}_{\tau_1}\right] 1_{A_1} \nonumber\\
&=\E^{y_1}\left[\delta(\rho(y_1,S\cap T))f\left(X_{\rho(y_1,S\cap T)}\right)\right] 1_{A_1} = J(y_1,S\cap T) 1_{A_1}.\label{Mark prop}
\end{align}
Moreover, note that on the event $A_1$, we have $y_1\notin S$. Then Remark~\ref{rem:Theta} implies that 
\begin{equation}\label{f<J}
f(y_1) < J(y_1,S).
\end{equation}
With \eqref{Mark prop} and \eqref{f<J}, \eqref{J-J} yields
\begin{align}
J(x,S\cap T)-J(x,T) &\geq \E^{y_0}\left[1_{A_1}\delta(\tau_1)\left(J(y_1,S\cap T)-J(y_1,S)\right)\right]\notag\\
&=\mathfrak{E}_1[J(y_1,S\cap T)-J(y_1,S)].\label{initial}
\end{align}

In the following, we will carry out the above argument recursively. First, repeating the argument above \eqref{initial} for $J(y_1,S\cap T)-J(y_1,S)$, instead of $J(x,S\cap T)-J(x,T)$, yields 
\[
J(y_1,S\cap T)-J(y_1,S) \geq \mathfrak{E}_2[J(y_2,S\cap T)-J(y_2,T)].
\]
This, together with \eqref{initial}, implies 
\begin{equation}\label{initial1}
J(x,S\cap T)-J(x,T) \geq \mathfrak{E}_1\circ\mathfrak{E}_2[J(y_2,S\cap T)-J(y_2,T)].
\end{equation}
Next, repeating the argument above \eqref{initial} for $J(y_2,S\cap T)-J(y_2,T)$, instead of $J(x,S\cap T)-J(x,T)$, and using \eqref{initial1}, we obtain   
\[
J(x,S\cap T)-J(x,T) \geq \mathfrak{E}_1\circ\mathfrak{E}_2\circ \mathfrak{E}_3[J(y_3,S\cap T)-J(y_3,S)].
\]
Continuing this procedure, we get
\begin{equation}\label{for each n}
J(x,S\cap T)-J(x,T) \geq \mathfrak{E}_1\circ\dotso\circ\mathfrak{E}_{2n}[J(y_{2n},S\cap T)-J(y_{2n},T)],\quad \forall n\in\N.
\end{equation}
%\begin{eqnarray}
%\notag J(x,S\cap T)-J(x,T) &\geq&\mathcal{E}_1[J(y_1,S\cap T)-J(y_1,S)]\\
%\notag &\geq&\mathcal{E}_1\circ\mathcal{E}_2[J(y_2,S\cap T)-J(y_2,T)]\\
%\notag &\geq&\dotso\\
%\notag &\geq&\mathcal{E}_1\circ\dotso\circ\mathcal{E}_{2n}[J(y_{2n},S\cap T)-J(y_{2n},T)]
%\end{eqnarray}
Since $f$ is bounded, there exists $C>0$ such that $|J(y,S')|\le C$ for all $y\in\X$ and $S'\in\B(\X)$. Then,
\begin{align*}
&\left|\mathfrak{E}_1\circ\dotso\circ\mathfrak{E}_{2n}[J(y_{2n},S\cap T)-J(y_{2n},T)]\right|\\
&\hspace{0.2in}\leq\mathfrak{E}_1\circ\dotso\circ\mathfrak{E}_{2n}[|J(y_{2n},S\cap T)-J(y_{2n},T)|]\leq \delta(1)^{2n}\cdot 2C\rightarrow 0\quad \hbox{as}\ n\to\infty,
\end{align*}
where the convergence follows from $\delta(1)<1$. We then conclude from \eqref{for each n} that
$$J(x,S\cap T)-J(x,T)\geq 0,\quad\forall\,x\in\X.$$
By simply switching the roles of $S$ and $T$ in the proof above, we can also obtain
$$J(x,S\cap T)-J(x,S)\geq 0,\quad\forall\,x\in\X.$$
It follows that \eqref{value increased} is established.

By Remark~\ref{rem:equi nonempty}, $S\cap T\neq\emptyset$. For any $x\notin S\cap T$, if $x\notin S$, then $f(x)<J(x,S)\leq J(x,S\cap T)$, which implies $x\notin\Theta(S\cap T)$; if $x\notin T$, the same argument shows $x\notin\Theta(S\cap T)$. Thus, we conclude that $\Theta(S\cap T)\subseteq S\cap T$.
\end{proof}

Under \eqref{DI}, we have a partial converse of Proposition~\ref{prop:equi unique}.

\begin{proposition}\label{p1}
Suppose \eqref{DI} holds. If $S^*:=\bigcap_{S\in\cE} S$ is an equilibrium, then it is optimal.
\end{proposition}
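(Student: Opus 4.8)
The plan is to obtain the statement directly from Lemma~\ref{l2}. Fix an arbitrary $S\in\cE$. By hypothesis $S^*=\bigcap_{S'\in\cE}S'$ is itself an equilibrium, so $S^*\in\cE$; and by construction $S^*\subseteq S$, whence $S^*\cap S=S^*$. Applying Lemma~\ref{l2} to the pair $S^*,S\in\cE$ — this is where \eqref{DI} enters, since Lemma~\ref{l2} requires it — and using $S^*\cap S=S^*$, I get
$$J(x,\rho(x,S^*))=J(x,\rho(x,S^*\cap S))\geq J(x,\rho(x,S^*))\vee J(x,\rho(x,S))\geq J(x,\rho(x,S)),\quad\forall\,x\in\X.$$

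From here the passage to value functions is routine via the representation $V(\cdot,S')=f(\cdot)\vee J(\cdot,\rho(\cdot,S'))$: for every $x\in\X$,
$$V(x,S^*)=f(x)\vee J(x,\rho(x,S^*))\geq f(x)\vee J(x,\rho(x,S))=V(x,S),$$
using that $t\mapsto f(x)\vee t$ is nondecreasing. Since $S\in\cE$ was arbitrary, this is precisely the definition of $S^*$ being an optimal equilibrium.

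I do not anticipate a genuine obstacle, as the substance is already packaged in Lemma~\ref{l2}. The only points requiring a moment's care are: (i) noting that the hypothesis ``$S^*$ is an equilibrium'' is exactly what licenses feeding $S^*$ into Lemma~\ref{l2} as one of the two equilibria; and (ii) observing that $S^*\cap S=S^*$ collapses the left-hand side of \eqref{value increased} to $J(x,\rho(x,S^*))$, so that the inequality becomes the desired comparison. After that, the inequality for $V$ is immediate from monotonicity of the maximum.
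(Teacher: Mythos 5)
Your argument is correct and is essentially identical to the paper's proof: both apply Lemma~\ref{l2} to the pair $S^*$, $S$ (using that $S^*\in\cE$ by hypothesis), collapse $S^*\cap S$ to $S^*$, and pass to $V$ via monotonicity of $t\mapsto f(x)\vee t$. No issues.
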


\begin{proof}
Let $T$ be an arbitrary equilibrium. By Lemma \ref{l2},  
$$V(x,T)=f(x)\vee J(x,\rho(x,T))\leq f(x)\vee J(x,\rho(x,S^*\cap T))=f(x)\vee J(x,\rho(x,S^*))=V(x,S^*),$$
for all $x\in\X$. Hence, $S^*$ is optimal. 
\end{proof}

The next result establishes the existence of the optimal equilibrium. Under appropriate continuity conditions, it is shown that $S^*:=\bigcap_{S\in\cE} S$, the candidate from  Propositions~\ref{prop:equi unique} and \ref{p1}, is indeed the optimal equilibrium.

%\begin{theorem}
%Assume \eqref{e1} holds and $X$ takes values in a finite set. Then there exists an optimal equilibrium, which is the intersection of all the equilibriums.
%\end{theorem}
%\begin{proof}
%Denote $\mathfrak{E}=(S_i)_{i=0}^N.$ Let $S=\cap_{i=0}^N S_i$. By Lemma \ref{l2} and Remark \ref{r1}, we can find an equilibrium $S_1^*$ such that $S_1^*\subset S_0\cap S_1$. Then using Lemma \ref{l2} and Remark \ref{r1} again we can find an equilibrium $S_2^*$ such that $S_2^*\subset S_1^*\cap S^2\subset \cap_{i=0}^2 S_i$. Repeating this process, we can find an equilibrium $S_N^*$ such that $S_N^*\subset \cap_{i=0}^N S_i$. As $S_N^*\in\mathfrak{E}$, we have that $S_N^*=\cap_{i=0}^N S_i$. Then the conclusion follows from Proposition \ref{p1}.
%\end{proof}

\begin{theorem}\label{t2}
Suppose that \eqref{DI} holds and $f$ is upper semicontinuous. In addition, assume that the transition kernel $Q$ in \eqref{kernel} is lower semicontinuous under the weak star topology, i.e. for any bounded Borel measurable $g:\X\mapsto\R$ and $\{x_n\}_{n\in\N}$ in $\X$ with $x_n\rightarrow x$,
\begin{equation}\label{e7}
\liminf_{n\rightarrow\infty}\int_\X g(y)\,Q(x_n,dy)\ge \int_\X g(y)\,Q(x,dy).
\end{equation}
Then, $S^*:=\bigcap_{S\in\cE}S$ is the optimal equilibrium.
\end{theorem}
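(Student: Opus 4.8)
The plan is to reduce the theorem to the single inclusion $\Theta(S^*)\subseteq S^*$, and then to obtain that inclusion by approximating $S^*$ from above by a \emph{countable} decreasing family of equilibria, using the continuity hypotheses to make the approximation possible. By \propref{prop:equi unique} and \propref{p1} it is enough to prove that $S^*:=\bigcap_{S\in\cE}S$ is an equilibrium; recall $\cE\neq\emptyset$ by \corref{coro:existence of equi}, and $S^*\supseteq E\neq\emptyset$ by \remref{rem:equi nonempty}. Suppose we have already shown $\Theta(S^*)\subseteq S^*$. Then \thmref{t1}, applied to the nonempty set $S^*$, gives that $(S^*)_\infty:=\bigcap_{n\in\N}\Theta^n(S^*)$ is an equilibrium with $(S^*)_\infty\subseteq S^*$; but $S^*$ is contained in every equilibrium, so also $S^*\subseteq (S^*)_\infty$, whence $S^*=(S^*)_\infty\in\cE$. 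Thus the whole proof rests on establishing $\Theta(S^*)\subseteq S^*$.

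The first step I would carry out is to show that, under the hypotheses of the theorem, every equilibrium is a closed subset of $\X$. For any $S\in\B(\X)$, conditioning on $X_1$ and using the strong Markov property gives $J(x,\rho(x,S))=\int_\X h_S(y)\,Q(x,dy)$, where $h_S(y):=\E^y\big[\delta\big(1+\rho^*(y,S)\big)f\big(X_{\rho^*(y,S)}\big)\big]$ and $\rho^*(y,S):=\inf\{t\ge 0:X^y_t\in S\}$; this $h_S$ is bounded (by $\sup f$) and Borel measurable. Hence \eqref{e7}, applied to $g=h_S$, shows that $x\mapsto J(x,\rho(x,S))$ is lower semicontinuous; since $f$ is upper semicontinuous, $x\mapsto f(x)-J(x,\rho(x,S))$ is upper semicontinuous, so $\Theta(S)=\{x\in\X:f(x)-J(x,\rho(x,S))\ge 0\}$ is closed, and in particular every $S\in\cE$ is closed. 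Consequently $(S^*)^c=\bigcup_{S\in\cE}S^c$ is open in the Polish, hence Lindel\"of, space $\X$, so there is a countable subfamily $\{S_k\}_{k\in\N}\subseteq\cE$ with $\bigcup_{k\in\N}S_k^c=(S^*)^c$, that is, $S^*=\bigcap_{k\in\N}S_k$.

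It then remains to prove $f(x)<J(x,\rho(x,S^*))$ for every $x\notin S^*$. Fix such an $x$ and pick $k_0$ with $x\notin S_{k_0}$, so that $f(x)<J(x,\rho(x,S_{k_0}))$ by \remref{rem:Theta}. Set $R_n:=\bigcap_{k\le n}S_k$. I would observe that the proof of \lemref{l2} uses the equilibrium property of its two sets only through the implication ``$z\notin S\Rightarrow f(z)<J(z,\rho(z,S))$'', i.e.\ through $\Theta(S)\subseteq S$; hence, applying that proof with the pair $(R_{n-1},S_n)$ and inducting on $n$ (base case $R_1=S_1\in\cE$), one gets simultaneously $\Theta(R_n)\subseteq R_n$ and $J(x,\rho(x,R_n))\ge J(x,\rho(x,R_{n-1}))\vee J(x,\rho(x,S_n))$, so $J(x,\rho(x,R_n))\ge\max_{k\le n}J(x,\rho(x,S_k))\ge J(x,\rho(x,S_{k_0}))$ for all $n\ge k_0$. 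Since $R_n\downarrow S^*$, the argument establishing \eqref{e4} gives $\rho(x,R_n)(\omega)\uparrow\rho(x,S^*)(\omega)$ for every $\omega$, and then the dominated convergence theorem yields $J(x,\rho(x,R_n))\to J(x,\rho(x,S^*))$. Therefore $J(x,\rho(x,S^*))=\lim_{n\to\infty}J(x,\rho(x,R_n))\ge J(x,\rho(x,S_{k_0}))>f(x)$, which is exactly $x\notin\Theta(S^*)$; together with the reduction of the first paragraph, this completes the proof.

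The step I expect to be the crux is the closedness of equilibria, and within it the lower semicontinuity of the continuation value $x\mapsto J(x,\rho(x,S))$: although $\rho(x,S)$ is a path-dependent object, the quantity $J(x,\rho(x,S))$ is produced by a single application of the kernel $Q(x,\cdot)$ to the \emph{fixed} bounded Borel function $h_S$, which is precisely the setting in which \eqref{e7} is usable. A secondary, purely bookkeeping point is checking that \lemref{l2}, stated for two equilibria, applies to the finite intersections $R_n$, which are only ``$\Theta$-supersolutions'' in the sense $\Theta(R_n)\subseteq R_n$; this requires only re-reading its proof. Once these are in place, the Lindel\"of reduction and the passage to the limit along $R_n$ are routine applications of \eqref{e4} and dominated convergence.
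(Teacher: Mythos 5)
Your proposal is correct, and its skeleton matches the paper's proof: lower semicontinuity of $x\mapsto J(x,\rho(x,S))$ obtained by writing it as a single application of $Q(x,\cdot)$ to the fixed bounded Borel function $h_S$ and invoking \eqref{e7}, hence closedness of every equilibrium; reduction of $S^*$ to a countable intersection of equilibria; and a monotone passage to the limit using the argument behind \eqref{e4} and dominated convergence. You diverge from the paper at three points, all legitimate. First, you extract the countable subfamily with $S^*=\bigcap_k S_k$ from the Lindel\"of property of the Polish space $\X$, whereas the paper invokes Proposition 4.1 of \cite{BS12} applied to the upper semicontinuous indicators $1_S$; for indicators of closed sets these are the same fact, and your version is more self-contained. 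Second, you work directly with the raw finite intersections $R_n=\bigcap_{k\le n}S_k$, which are only $\Theta$-supersolutions, so you need \lemref{l2} to hold for nonempty Borel sets satisfying $\Theta(\cdot)\subseteq\cdot$ rather than for equilibria. Your justification is accurate: the lemma's proof uses the equilibrium property only through the implication $z\notin S\Rightarrow f(z)<J(z,\rho(z,S))$, and nonemptiness of the intersections still follows from $R_n\supseteq S^*\supseteq E$. The paper avoids this mild strengthening by replacing $T_{n-1}\cap S_n$ with the genuine equilibrium $(T_{n-1}\cap S_n)_\infty$ at each stage, so that \lemref{l2} and Remark~\ref{r1} apply verbatim. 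Third, your reduction of the whole theorem to the single inclusion $\Theta(S^*)\subseteq S^*$ --- followed by \thmref{t1} applied to $S^*$ and minimality of $S^*$ among equilibria to force $S^*=(S^*)_\infty\in\cE$ --- is a clean shortcut; the paper instead reruns both halves of the limiting argument of \thmref{t1} with $\Theta^n(S)$ replaced by $T_n$. In sum, your route avoids constructing the auxiliary equilibria $T_n$ and shortens the final verification, at the cost of re-examining the proof of \lemref{l2}; the paper's route keeps every intermediate object inside $\cE$ so that all cited results apply exactly as stated.
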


\begin{proof} 
For any $S\in\mathcal{B}(\X)$, observe that $J(x, \rho(x,S))$ can be expressed as
\[
J(x,\rho(x,S))=\int_\X I(y,S)\,Q(x,dy),
\]
where
\[
I(y,S) := \E^y[\delta(\rho^*(y,S)+1)X_{\rho^*(y,S)}]
\]
%$$J(x,\rho(x,S))=\int_\X J(y,\rho^*(y,S))\,Q(x,dy),$$
and $\rho^*(y,S)$ is defined as in \eqref{V2}. Thus, under \eqref{e7}, $J(x,\rho(x,S))$ is lower semicontinuous in $x$. Now, if $S$ is additionally an equilibrium, then
$$S=\Theta(S)=\{x\in\X:\ f(x)\geq J(x,\rho(x,S))\}$$
is a closed subset of $\X$, thanks to the upper semicontinuity of $f$ and the lower semicontinuity of $J(\cdot,\rho(\cdot,S))$. Hence, $S^*:=\bigcap_{S\in\cE}S$ is also closed and thus Borel measurable. 

Since $S$ is closed and thus $1_S$ is upper semicontinuous for all $S\in\cE$, Proposition 4.1 in \cite{BS12} asserts the existence of a countable subset $(S_n)_{n\in\mathbb{N}}$ of $\cE$ such that
\begin{equation}\label{e8}
1_{S^*}=\inf_{S\in\cE}1_S=\inf_{n\in\N}1_{S_n}.
\end{equation}
It follows that $S^*=\bigcap_{n\in\N}S_n$. Now, let $T_1:=S_1$. By the same argument in %Lemma \ref{l2} and 
Remark \ref{r1}, $T_2:=(T_1\cap S_2)_\infty\subseteq T_1\cap S_2$ is an equilibrium with $J(x,\rho(x,T_2))\geq J(x,\rho(x,T_1))$ for all $x\in\X$. Similarly, $T_3:=(T_2\cap S_3)_\infty\subseteq T_2\cap S_3$ is an equilibrium with $J(x,\rho(x,T_3))\geq J(x,\rho(x,,T_2))$ for all $x\in\X$. Repeating this procedure, we get $(T_n)_{n\in\N}$ in $\cE$ such that $T_{n+1}\subseteq T_n\cap S_{n+1}$ and $J(x,\rho(x,T_{n+1}))\geq J(x,\rho(x,T_n))$ for all $n\in\N$. As a result,
$$S^*=\bigcap_{S\in\cE}S\subseteq\bigcap_{n\in\N}T_n\subseteq\bigcap_{n\in\N}S_n=S^*,$$
which implies
$S^*=\bigcap_{n\in\N}T_n$. %=\lim_{n\rightarrow\infty}T_n.$$
Then, by following the same arguments in the proof of Theorem \ref{t1}, with $\Theta^n(S)$ replaced by $T_n$, we can show that $S^*$ is an equilibrium. It is thus the optimal equilibrium thanks to Proposition~\ref{p1}. 
\end{proof}

When $f$ is instead lower semicontinuous, the existence of the optimal equilibrium still holds.

\begin{theorem}\label{t3}
Suppose that \eqref{DI} holds, $f$ is lower semicontinuous, and the transition kernel $Q$ in \eqref{kernel} is lower semicontinuous under the weak star topology, as stated in \eqref{e7}. Then, $S^*:=\bigcap_{S\in\cE}S$ is the optimal equilibrium.
\end{theorem}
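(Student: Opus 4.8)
The plan is to run the proof of \thmref{t2} essentially verbatim, replacing the one ingredient that genuinely used upper semicontinuity of $f$. In \thmref{t2} that ingredient was the assertion that every equilibrium $S=\Theta(S)$ is a closed set; this was used twice — to conclude that $S^*:=\bigcap_{S\in\cE}S$ is Borel, and, through Proposition 4.1 in \cite{BS12}, to extract a countable subfamily $(S_n)_{n\in\N}\subseteq\cE$ with $\bigcap_n S_n=S^*$. When $f$ is only lower semicontinuous, equilibria need no longer be closed, so I would instead carry both points through the \emph{value functions} rather than the stopping regions themselves.

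First, exactly as in \thmref{t2}, writing $J(x,\rho(x,S))=\int_\X I(y,S)\,Q(x,dy)$ with $I(\cdot,S)$ bounded and Borel, condition \eqref{e7} makes $x\mapsto J(x,\rho(x,S))$ lower semicontinuous for every $S\in\B(\X)$; since $f$ is lower semicontinuous, so is $V(\cdot,S)=f(\cdot)\vee J(\cdot,\rho(\cdot,S))$ for each $S\in\cE$. Next, set $V^*(x):=\sup_{S\in\cE}V(x,S)$. As a supremum of lower semicontinuous functions, $V^*$ is lower semicontinuous, hence Borel, and I would observe that
\[
S^*=\bigcap_{S\in\cE}S=\{x\in\X:\ V^*(x)=f(x)\},
\]
since by \remref{rem:Theta}, $x$ lies in every $S\in\cE$ iff $f(x)\ge J(x,\rho(x,S))$ for every $S\in\cE$, iff $f(x)\ge\sup_{S\in\cE}J(x,\rho(x,S))$, iff $V^*(x)=f(x)$ (recall $V^*\ge f$). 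As $V^*$ and $f$ are Borel, this shows $S^*$ is Borel.

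For the countable reduction, I would use that $\X$ is second countable: for each rational $q$ the open set $\{V^*>q\}=\bigcup_{S\in\cE}\{V(\cdot,S)>q\}$, being a union of open sets, is already the union of countably many of them; collecting these over all $q\in\Q$ yields a countable family $(S_n)_{n\in\N}\subseteq\cE$ with $V^*=\sup_n V(\cdot,S_n)$. By \eqref{V1} ($x\in S_n$ iff $V(x,S_n)=f(x)$) this gives $\bigcap_n S_n=\{x:\sup_n V(x,S_n)=f(x)\}=\{x:V^*(x)=f(x)\}=S^*$. From here the argument is that of \thmref{t2}: set $T_1:=S_1$ and inductively $T_{n+1}:=(T_n\cap S_{n+1})_\infty$, which by \lemref{l2}, \thmref{t1} and \remref{r1} is an equilibrium with $T_{n+1}\subseteq T_n\cap S_{n+1}$ and $J(x,\rho(x,T_{n+1}))\ge J(x,\rho(x,T_n))$ for all $x$; then $S^*\subseteq\bigcap_n T_n\subseteq\bigcap_n S_n=S^*$, so $\bigcap_n T_n=S^*$, and running the proof of \thmref{t1} with $\Theta^n(S)$ replaced by the decreasing sequence of equilibria $(T_n)$ (using $\rho(x,S^*)=\lim_n\rho(x,T_n)$, monotonicity of $n\mapsto J(x,\rho(x,T_n))$, and dominated convergence) shows $\Theta(S^*)=S^*$. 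By \propref{p1}, $S^*$ is the optimal equilibrium.

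The only real departure from \thmref{t2}, and the step I expect to require the most care, is the recovery of Borel measurability of $S^*$ together with the countable reduction to $(S_n)$ in the absence of closedness of equilibria. The resolution above hinges on the observation that, although the stopping regions $S$ are no longer topologically well-behaved when $f$ is lower semicontinuous, the associated value functions $V(\cdot,S)$ remain lower semicontinuous, so second countability of $\X$ can still be exploited — now at the level of the functions $V(\cdot,S)$ rather than the sets $S$.
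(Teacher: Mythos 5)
Your proof is correct and follows essentially the same route as the paper's: lower semicontinuity of the value functions $V(\cdot,S)$, a countable reduction (the paper cites Proposition 4.1 of \cite{BS12}, which is exactly your Lindel\"of argument), the decreasing sequence $T_{n+1}:=(T_n\cap S_{n+1})_\infty$, and the Theorem~\ref{t1} argument applied to $(T_n)$. The only cosmetic difference is the finish: you identify $\bigcap_n S_n$ with $S^*$ directly via $S^*=\{x:V^*(x)=f(x)\}$ and then invoke Proposition~\ref{p1}, whereas the paper shows $V(\cdot,T^*)=\sup_{S\in\cE}V(\cdot,S)$ and uses the uniqueness statement of Proposition~\ref{prop:equi unique} to conclude $T^*=S^*$.
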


\begin{proof}
As shown in the proof of Theorem~\ref{t2}, \eqref{e7} implies that $J(x,\rho(x,S))$ is lower semicontinuous in $x$ for any $S\in\B(\X)$. Thus, $V(x,S)=f(x)\vee J(x,\rho(x,S))$ is lower semicontinuous in $x$ for any $S\in\cE$. By Proposition 4.1 in \cite{BS12}, there exist $(S_n)_{n\in\N}$ in $\cE$ such that 
\begin{equation}\label{sup V}
\sup_{S\in\cE} V(x,S) = \sup_{n\in\N} V(x,S_n),\quad \forall x\in\X.  
\end{equation}
Let $T_0:=\X$ and $T_1:=S_1$. By the same argument in Remark \ref{r1}, $T_2:=(T_1\cap S_2)_\infty\subseteq T_1\cap S_2$ is an equilibrium with $J(x,\rho(x,T_2))\geq J(x,\rho(x,T_1))$ for all $x\in\X$. Similarly, $T_3:=(T_2\cap S_3)_\infty\subseteq T_2\cap S_3$ is an equilibrium with $J(x,\rho(x,T_3))\geq J(x,\rho(x,,T_2))$ for all $x\in\X$. Repeating this procedure, we get $(T_n)_{n\in\N}$ in $\cE$ such that $T_{n}\subseteq T_{n-1}\cap S_{n}$ and $J(x,\rho(x,T_{n+1}))\geq J(x,\rho(x,T_n))$ for all $n\in\N$. Now, by following the same arguments in the proof of Theorem \ref{t1}, with $\Theta^n(S)$ replaced by $T_n$, we can show that $T^*:= \bigcap_{n\in\N} T_n$ is an equilibrium and $J(x,\rho(x,T^*))\ge J(x,\rho(x,T_{n}))$ for all $x\in\X$ and $n\in\N$. Recalling $T_{n}\subseteq T_{n-1}\cap S_{n}$ and \eqref{value increased}, for each $x\in\X$ we have
\[
J(x,\rho(x,T^*))\ge J(x,\rho(x,T_{n})) = J(x,\rho(x,T_{n}\cap S_{n}))\ge J(x,\rho(x,S_{n})),\quad \forall n\in\N.
\]
This implies $V(x,T^*)\ge V(x,S_n)$ for all $x\in\X$ and $n\in\N$. We therefore conclude from \eqref{sup V} that
\[
\sup_{S\in\cE} V(x,S) = V(x,T^*),\quad \forall x\in\X.  
\]
That is, $T^*$ is an optimal equilibrium. By Proposition~\ref{prop:equi unique}, $T^*= \bigcap_{S\in\cE} S=S^*$.
\end{proof}

\begin{remark}
If $\X$ is a finite or countable set, both the semicontinuity of $f$ and \eqref{e7} are satisfied trivially, under the discrete topology of $\X$.
The discrete topology, however, cannot be used anymore when $\X$ is uncountable. This is because the metric space induced by the discrete topology is not separable when $\X$ is uncountable, and thus forbids the use of Proposition 4.1 in \cite{BS12}, a key step in the proof of Theorem~\ref{t2}. 
%cannot be applied since the underlying metric space is not separable.)
\end{remark}

We provide below a sufficient condition for \eqref{e7}. %, the lower semi continuity in the weak-star sense. In fact, it is sufficient for the continuity in the weak-star sense.

\begin{remark}
Assume that, for any $x\in\X$, the transition kernel $Q(x,\cdot)$ admits a probability density function. That is,
$$Q(x,dy)=q(x,y)dy.$$
If $x\mapsto q(x,y)$ is lower semicontinuous for each $y\in\X$, then $Q$ is continuous under the weak star topology, i.e. for any bounded Borel measurable $g:\X\mapsto\R$ and $\{x_n\}_{n\in\N}$ in $\X$ with $x_n\rightarrow x$,
\begin{equation}\label{e9}
\lim_{n\rightarrow\infty}\int_\X g(y)\,Q(x_n,dy)= \int_\X g(y)\,Q(x,dy).
\end{equation}
 %\eqref{e7} holds. 
Indeed, for any Borel measurable $g:\X\mapsto\R$ with $C:=\sup_{x\in\X}|g(x)|< \infty$, Fatou's lemma gives
$$\liminf_{n\rightarrow\infty}\int_\X (C\pm g(y))q(x_n,y)dy\geq\int_\X (C\pm g(y))q(x,y)dy.$$
This implies that
$$\limsup_{n\rightarrow\infty}\int_\X g(y)q(x_n,y)dy\leq\int_\X g(y)q(x,y)dy\leq\liminf_{n\rightarrow\infty}\int_\X g(y)q(x_n,y)dy,$$
and thus \eqref{e9} follows.
\end{remark}

%%%%%%%%%%%%%%%%%%%%%%%%%%%%%%%%%%%%%%%%%%%%%%%%%%%
%%%%%%%%%%%%%%%%%%%%%%%%%%%%%%%%%%%%%%%%%%%%%%%%%%%

\section{An Example}\label{sec:example}
To illustrate our theoretic results, we focus on a practical stopping problem in this section. Our goal is to characterize explicitly the optimal equilibrium. 

Consider an asset price process $X$ in a discrete-time binomial model. Specifically, let $u>1$ and assume that $X$ takes values in $\X:=\{u^i:\ i=0,\pm 1,\pm 2,\dotso\}$. Suppose that there exsits $p\in (0,1)$ such that
$$p = \P(X_{t+1}/X_t=u)\quad\text{and}\quad  1-p = \P(X_{t+1}/X_t=u^{-1}), \quad \forall t=0,1,2,...$$
We assume additionally that $X$ is a submartingale, which corresponds to the condition $p\ge \frac{1}{u+1}$. 

Let the payoff (or, profit) function be $f(x):= (K-x)^+$ on $\X$, where $K>0$ is a given constant. Also, consider the hyperbolic discount function $\delta(t) := \frac{1}{1+\beta t}$ for $t\in\N\cup\{0\}$, where $\beta>0$ is a given constant. It can be easily verified that \eqref{DI} is satisfied. The objective function \eqref{J} then becomes
\[
J(x,\tau) = \E^x\left[\frac{(K-X_\tau)^+}{1+\beta\tau}\right].
\]
This can be viewed as a real options problem where the management of a company considers an investment plan, which has constant revenue $K$ and stochastic cost evolving as $X$, and would like to decide when to carry it out.

To find the optimal equilibrium $S^* = \bigcap_{S\in\cE} S$, we need to first characterize the collection $\cE$ of all equilibria. Recall from Corollary~\ref{coro:existence of equi} that $\cE$ is nonempty.

%\begin{lemma}\label{ll1}
%For any $S\in\cE$, 
%\begin{itemize}
%\item [(i)] $S\subseteq\X\cap(0,K)$;
%\item [(ii)] $S=(0,a]\cap\X$ for some $0<a<K$.
%\end{itemize}
%\end{lemma}

%\begin{proof}
%(i) Fix $S\in \cE$. By Remark~\ref{rem:equi nonempty}, $S\neq\emptyset$. Note that $S$ must intersect $(0,K)$. Indeed, if $S\cap (0,K)=\emptyset$, then $J(x,\rho(x,S)) = 0< (K-x)^+ = f(x)$ for all $x\in \X\cap (0,K)$, which contradicts the fact that $S\in\cE$. 
%Now, assume to the contrary that $S\cap [K,\infty)\neq\emptyset$. Take $x_0:=\min \{ S \cap[K,\infty)\}$. By definition, $x_0\in S$ and $x_0\ge K$. This, together with $S\cap (0,K)\neq\emptyset$, implies $\P^{x_0}(X_{\rho(x_0,S)}<K)>0$. It follows that 
%$$f(x_0)=(K-x_0)^+=0<\E^{x_0}\left[\frac{(K-X_{\rho(x_0,S)})^+}{1+\beta\rho(x_0,S)}\right] = J(x_0,\rho(x_0,S)).$$
%This implies that $x_0\notin S$, a contradiction.
%\end{proof}

\begin{lemma}\label{ll2}
For any $S\in\cE$, $S=(0,y]\cap\X$ for some $y\in(0,K)$.
\end{lemma}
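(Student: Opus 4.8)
\textbf{Proof proposal for Lemma~\ref{ll2}.}

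The plan is to show that every equilibrium $S\in\cE$ is a ``down-set'' of the form $(0,y]\cap\X$, by combining two observations: (i) the payoff $f(x)=(K-x)^+$ is decreasing in $x$, so stopping is attractive only at small states; and (ii) the submartingale property makes the continuation value $J(x,\rho(x,S))$ behave monotonically enough that the stopping region cannot have ``gaps''. First I would record the easy endpoints. Since $f(x)=0$ for $x\ge K$ while $J(x,\rho(x,S))\ge 0$, Remark~\ref{rem:Theta} forces $x\notin S$ whenever $x\ge K$; so $S\subseteq (0,K)\cap\X$. At the other extreme, for $x$ small enough (in fact for $x$ with $(K-x)^+$ exceeding $M\delta(1)$ where $M=\sup f$, cf.\ Remark~\ref{rem:nonempty}), we have $x\in E\subseteq S$, so $S$ is nonempty and contains all sufficiently small states. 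It remains to prove that $S$ is an ``interval'' in $\X$, i.e.\ if $u^i\in S$ and $j<i$ then $u^j\in S$.

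The heart of the argument is a comparison of continuation values across the two one-step moves. Fix $x=u^i\in\X$ with $x<K$, and write $x_+:=ux$, $x_-:=u^{-1}x$ for its two successors. By conditioning on the first step and using time-homogeneity,
\[
J(x,\rho(x,S)) = \E^x\!\left[\delta(\rho(x,S))f(X_{\rho(x,S)})\right]
\]
can be decomposed in terms of the ``shifted'' value started from $x_+$ and $x_-$; the key point is that the discount factor picks up exactly one extra period, so $J(x,\rho(x,S))$ is a $p$-weighted average of $\delta(1)$-discounted values from $x_+$ and $x_-$. I would then argue monotonicity: moving from $x$ to $x_- = u^{-1}x$ (a smaller state) the immediate payoff $f$ weakly increases, and — using the submartingale/structure of the binomial tree together with the monotonicity of $f$ — the continuation value $J(\cdot,\rho(\cdot,S))$ does not increase faster than $f$ does. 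Concretely, I expect to show that the ``stopping is better'' inequality $f(x)\ge J(x,\rho(x,S))$ propagates downward: if it holds at $x$ it holds at $u^{-1}x$. Equivalently, the function $x\mapsto f(x)-J(x,\rho(x,S))$ changes sign only once along the lattice, from nonnegative (on small states) to negative (on large states). Granting this, $S=\{x : f(x)\ge J(x,\rho(x,S))\}$ is automatically of the form $(0,y]\cap\X$, and since $S\subseteq(0,K)\cap\X$ and $S\ne\emptyset$, we get $y\in(0,K)$.

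The main obstacle is establishing the downward propagation of $f(x)\ge J(x,\rho(x,S))$ rigorously, because $\rho(x,S)$ and $\rho(u^{-1}x,S)$ are different stopping times driven by the same noise, and one must control how the stopping region $S$ interacts with the tree started from the two different initial points. The clean way is probably a coupling: run $X^{x}$ and $X^{u^{-1}x}$ off the same increments, so that $X^{u^{-1}x}_t = u^{-1}X^x_t$ pathwise. Then, assuming inductively (or by a separate sub-argument) that $S$ has already been shown to be ``downward closed below $x$'' — or arguing directly via Remark~\ref{rem:Theta} that all states below the current one are either in $S$ or lead to smaller-still states — one compares $\rho(u^{-1}x,S)$ with $\rho(x,S)$ and uses that $f(u^{-1}X^x_t)=(K-u^{-1}X^x_t)^+\ge (K-X^x_t)^+ = f(X^x_t)$ together with $\delta$ being decreasing. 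A cleaner alternative, which I would try first, is to avoid coupling entirely: use Remark~\ref{rem:Theta} to note that $x\notin S$ implies $f(x)<J(x,\rho(x,S))\le \delta(1)\E^x[f(X_1)\vee J(X_1,\rho(X_1,S))] = \delta(1)\E^x[V(X_1,S)]$, and then show that if some large state is in $S$ but an intermediate one is not, one derives a contradiction with the monotonicity of $f$ and the submartingale property. Either route reduces the lemma to a one-step monotonicity estimate on the binomial tree, which is the only genuinely computational part.
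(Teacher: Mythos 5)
Your overall shape (first rule out $[K,\infty)$, then show $S$ has no gaps) matches the paper's, but both halves have real problems. For the first half, Remark~\ref{rem:Theta} requires the \emph{strict} inequality $f(x)<J(x,\rho(x,S))$ to conclude $x\notin S$; since $f(x)=0$ for $x\ge K$, observing $J(x,\rho(x,S))\ge 0$ proves nothing. The paper first shows $S$ must meet $(0,K)$ (otherwise $J(x,\rho(x,S))=0<f(x)$ on $(0,K)\cap\X$, contradicting $S\in\cE$), then takes $x_0:=\min\{S\cap[K,\infty)\}$ and notes that from $x_0$ the process hits $S$ below $K$ with positive probability, so $J(x_0,\rho(x_0,S))>0=f(x_0)$, contradicting $x_0\in S$. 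You need this extra step to get strictness.

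For the main half, your proposal does not actually contain a proof. The coupling route is circular: to compare $\rho(u^{-1}x,S)$ with $\rho(x,S)$ you admit you would need $S$ to be ``downward closed below $x$,'' which is what is being proved. Your ``cleaner alternative'' rests on $J(x,\rho(x,S))\le\delta(1)\,\E^x[V(X_1,S)]$, but condition \eqref{DI} gives $\delta(1)\delta(k)\le\delta(1+k)$, so the one-step decomposition yields $J(x,\rho(x,S))\ge\delta(1)\,\E^x[V(X_1,S)]$ --- the opposite direction. The missing idea is the paper's: if $x\notin S$ but $S$ has points on both sides of $x$, set $w:=\max\{(0,x)\cap S\}$ and $z:=\min\{(x,\infty)\cap S\}$; since the walk moves one lattice step at a time, $\rho(x,S)=\rho(x,\{w,z\})$ and $X_{\rho(x,S)}\in\{w,z\}\subseteq(0,K)$, so the positive part disappears and
\[
J(x,\rho(x,S))=\E^x\left[\frac{K-X_{\rho(x,\{w,z\})}}{1+\beta\rho(x,\{w,z\})}\right]\le\E^x\left[K-X_{\rho(x,\{w,z\})}\right]\le K-x=f(x),
\]
using $\delta\le 1$ and optional sampling for the submartingale $X$. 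This contradicts $f(x)<J(x,\rho(x,S))$ from Remark~\ref{rem:Theta} directly, and requires no downward propagation or monotonicity of $J(\cdot,\rho(\cdot,S))$ at all.
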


\begin{proof}
Fix $S\in \cE$. We will first prove that $S\subseteq(0,K)\cap \X$. By Remark~\ref{rem:equi nonempty}, $S\neq\emptyset$. Note that $S$ must intersect $(0,K)$. Indeed, if $S\cap (0,K)=\emptyset$, then $J(x,\rho(x,S)) = 0< (K-x)^+ = f(x)$ for all $x\in \X\cap (0,K)$, which contradicts the fact that $S\in\cE$. 
Now, assume to the contrary that $S\cap [K,\infty)\neq\emptyset$. Take $x_0:=\min \{ S \cap[K,\infty)\}$. By definition, $x_0\in S$ and $x_0\ge K$. This, together with $S\cap (0,K)\neq\emptyset$, implies $\P^{x_0}(X_{\rho(x_0,S)}<K)>0$. It follows that 
$$f(x_0)=(K-x_0)^+=0<\E^{x_0}\left[\frac{(K-X_{\rho(x_0,S)})^+}{1+\beta\rho(x_0,S)}\right] = J(x_0,\rho(x_0,S)).$$
This implies that $x_0\notin S$, a contradiction.

Now, to prove the desired result, suppose by contradiction that there exists $S\in\cE$ such that $S\neq (0,y]\cap\X$ for any $y\in(0,K)$. Then, there must exist $x\in\X\setminus S$ such that $(x,\infty)\cap S\neq\emptyset$. Set $w:=\max\{(0,x)\cap S\}$ and $z:=\min\{(x,\infty)\cap S\}$. Then we have $w<x<z<K$, where the last inequality follows from $S\subseteq(0,K)\cap \X$ established above.
Since $X$ is a submartingale, 
$$f(x) = K-x\geq\E^x\left[\frac{K-X_{\rho(x,\{w,z\})}}{1+\beta\rho(x,\{w,z\})}\right]=\E^x\left[\frac{(K-X_{\rho(x,S)})^+}{1+\beta\rho(x,S)}\right] = J(x,\rho(x,S)).$$
This, however, contradicts $x\notin S$ and $S\in\cE$.
\end{proof}

A natural question ensuing Lemma~\ref{ll2} is for which values of $y\in(0, K)$ the set $S=(0,y]\cap\X$ is an equilibrium. To answer this, we need a careful analysis involving random walks. Specifically, let $Y$ be a random walk defined on some probability space $(\bar\Omega,\bar\F,P)$ such that
\[
P(Y_{t+1}-Y_t=1)=p\quad\text{and}\quad P(Y_{t+1}-Y_t=-1)=1-p,\quad \forall t=0,1,2,...
\]
Consider 
\[
\xi:=\inf\{t\geq 0:\ Y_t=0\}
\]
and define 
\begin{equation}\label{alpha's}
\alpha_n:=E^n\left[\frac{1}{1+\beta\xi}\right]\quad \forall n\in\N\qquad \hbox{and}\qquad \alpha':=E^1\left[\frac{1}{1+\beta(\xi+1)}\right],
\end{equation}
where $E^n$ denotes the expectation under $P$ conditional on $Y_0 = n$. Note that $\alpha_n$, $n\in\N$, and $\alpha'$ can all be computed explicitly. For example, 
\begin{equation}\label{alpha's explicit}
\alpha_1=\sum_{k=1}^\infty\frac{\binom{2k-1}{k}p^{k-1}(1-p)^k}{2k-1}\cdot\frac{1}{1+\beta(2k-1)},\quad \alpha'=\sum_{k=1}^\infty\frac{\binom{2k-1}{k}p^{k-1}(1-p)^k}{2k-1}\cdot\frac{1}{1+2\beta k}.
\end{equation}
%$$\alpha'=\sum_{k=1}^\infty\frac{\binom{2k-1}{k}p^{k-1}(1-p)^k}{2k-1}\cdot\frac{1}{1+2\beta k}.$$

The next result establishes an upper bound of $y$ for equilibria of the form $(0,y]\cap\X$. 

\begin{lemma}\label{ll3}
If $S=(0,y]\cap\X$ belongs to $\cE$ for some $y\in S\cap(0,K)$, we must have $y\leq U\cdot K$, where
\begin{equation}\label{U}
%y\leq U\cdot K,\quad \hbox{where}\quad 
U:= \frac{1-\frac{1-p}{1+\beta}-p\alpha'}{1-\frac{1-p}{u(1+\beta)}-p\alpha'}.
\end{equation}
\end{lemma}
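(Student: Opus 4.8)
The plan is to extract a single scalar inequality from the equilibrium condition evaluated at the \emph{top} point $y$ of $S=(0,y]\cap\X$, and then solve it for $y$. Since $y\in S=\Theta(S)$, \remref{rem:Theta} gives $f(y)=K-y\geq J(y,\rho(y,S))$, and the whole lemma follows once this inequality is made explicit. Because $X$ moves multiplicatively by $u^{\pm1}$ and $y=u^i$ for some $i\in\Z$, the natural first move is to condition on the first step $X_1\in\{y/u,\,uy\}$.

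On the down move, of probability $1-p$, we land at $X_1=y/u\in S$, so $\rho(y,S)=1$, $X_{\rho(y,S)}=y/u$, and this branch contributes $(1-p)\,\delta(1)(K-y/u)=\tfrac{1-p}{1+\beta}\bigl(K-\tfrac{y}{u}\bigr)$. On the up move, of probability $p$, we land at $X_1=uy\notin S$; the key observation is that the log-price performs a nearest-neighbour random walk, so the first subsequent visit of $X$ to $(0,y]\cap\X$ must occur at the level $y$ itself (the walk cannot jump over $y$), in particular the lower part of $S$ plays no role. Hence $X_{\rho(y,S)}=y$ on this branch, and by the strong Markov property $\rho(y,S)$ has the law of $1+\xi$ with $\xi$ the first-passage time to $0$ of $Y$ started at $1$. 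Using the convention $\delta(\infty)f=0$ on $\{\xi=\infty\}$ together with the definition of $\alpha'$ in \eqref{alpha's}, this branch contributes $p(K-y)\,E^1\!\left[\tfrac{1}{1+\beta(\xi+1)}\right]=p(K-y)\alpha'$. Therefore $J(y,\rho(y,S))=\tfrac{1-p}{1+\beta}\bigl(K-\tfrac{y}{u}\bigr)+p(K-y)\alpha'$, and the equilibrium inequality reads
\[
K-y\ \geq\ \frac{1-p}{1+\beta}\Bigl(K-\tfrac{y}{u}\Bigr)+p(K-y)\alpha'.
\]

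The rest is algebra: collecting the terms proportional to $K$ and those proportional to $y$ rewrites this as $K\bigl(1-\tfrac{1-p}{1+\beta}-p\alpha'\bigr)\geq y\bigl(1-\tfrac{1-p}{u(1+\beta)}-p\alpha'\bigr)$, which gives $y\leq U\cdot K$ with $U$ as in \eqref{U}, provided the coefficient of $y$ (the denominator of $U$) is strictly positive. That positivity is immediate: since $\xi\geq 0$ we have $\alpha'\leq\tfrac{1}{1+\beta}$, so $\tfrac{1-p}{u(1+\beta)}+p\alpha'\leq\tfrac{1}{1+\beta}\bigl(\tfrac{1-p}{u}+p\bigr)<\tfrac{1}{1+\beta}<1$ because $u>1$ forces $\tfrac{1-p}{u}+p<1$. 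The only step I expect to require genuine care is the up-branch analysis, namely justifying that the first return of $X$ to $S$ from above is exactly at the level $y$ — so that $S$ below $y$ is irrelevant — and that the strong Markov property lets one read off the discount factor on that branch as $E^1[1/(1+\beta(\xi+1))]=\alpha'$; once that is in place everything is elementary.
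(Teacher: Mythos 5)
Your proposal is correct and follows essentially the same route as the paper: condition on the first step to get $J(y,\rho(y,S))=\tfrac{1-p}{1+\beta}\bigl(K-\tfrac{y}{u}\bigr)+p(K-y)\alpha'$ from the equilibrium inequality at $y$, then solve for $y$. The only addition beyond the paper's (terser) argument is your explicit check that the denominator of $U$ is positive, which is a worthwhile detail the paper leaves implicit.
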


\begin{proof}
In view of $S\in\cE$ and $y\in S\cap(0,K)$, we have
\begin{align}\label{ll3 eqn}
K-y = f(y) \geq J(y,\rho(y,S)) &= \E^y\left[\frac{(K-X_{\rho(y,S)})^+}{1+\beta\rho(y,S)}\right]\notag\\
%& = E^1\left[\frac{(K-Y_{\xi})^+}{1+\beta\xi}\right] 
&=(1-p)\cdot\frac{K-yu^{-1}}{1+\beta}+p\cdot(K-y)\alpha',
\end{align}
where $\alpha'$ is defined as in \eqref{alpha's}. %the second line follows from $\rho(y,S)=\inf\{t\ge 0: X^y_t = y/u\}$. 
Solving the above inequality for $y$ yields the desired result.
\end{proof}

The next result establishes a lower bound of $y$ for equilibria of the form $(0,y]\cap\X$.

\begin{lemma}\label{ll6}
If $S=(0,y]\cap\X$ belongs to $\cE$ for some $y\in S\cap(0,K)$, we must have $y > L\cdot K$, where
\begin{equation}\label{L}
L:= \frac{1-\alpha_1}{u-\alpha_1}.
\end{equation}
\end{lemma}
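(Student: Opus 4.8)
The plan is to argue by contradiction: suppose $S=(0,y]\cap\X\in\cE$ with $y\le L\cdot K$, and derive a violation of the equilibrium condition at the point $x=yu\in\X$. Since $y< K$ and $u>1$, the point $yu$ may or may not lie below $K$; but the crucial case is when $yu$ is \emph{just above} the stopping region, i.e. $yu\in\X\setminus S$, so that the equilibrium condition (Remark~\ref{rem:Theta}) forces $f(yu)<J(yu,\rho(yu,S))$. We will show that the bound $y\le L\cdot K$ makes this inequality fail.

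First I would compute $J(yu,\rho(yu,S))$ explicitly. Starting from $yu$, the process either moves up to $yu^2$ (with probability $p$) — staying outside $S$ — or moves down to $y$ (with probability $1-p$), which lies in $S$, so the agent stops there immediately with payoff $(K-y)^+ = K-y$. This gives a one-step recursion, but the upward branch leads to a path that must eventually come back down to level $y$. The key observation is that, by the submartingale/random-walk structure, the first hitting time of level $y$ starting from $yu$ (equivalently, $Y$ hitting $0$ starting from $1$) is exactly the $\xi$ appearing in \eqref{alpha's}, and upon hitting $y$ the payoff is $K-y$ (since $y<K$). So one gets
\[
J(yu,\rho(yu,S)) = (K-y)\,E^1\!\left[\frac{1}{1+\beta\xi}\right] = (K-y)\,\alpha_1,
\]
using the notation of \eqref{alpha's}. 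Here I am using that once the walk reaches level $y$ it is absorbed (stopped) with deterministic payoff $K-y$, so no further discounting beyond $\xi$ enters — this is what distinguishes $\alpha_1$ from $\alpha'$.

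Next, the equilibrium condition at $x=yu\notin S$ demands $f(yu) = (K-yu)^+ < (K-y)\alpha_1$. Now I would split into two subcases. If $yu\ge K$, then $f(yu)=0$, and since $(K-y)\alpha_1>0$ the inequality $0<(K-y)\alpha_1$ holds automatically and yields no contradiction directly — so I must instead use Lemma~\ref{ll3}, which forces $y\le U\cdot K$, and check (a short computation with the explicit forms \eqref{U}, \eqref{alpha's explicit}) that $U\cdot K< K/u$, so that in fact $yu<K$ always; hence only the second subcase occurs. If $yu<K$, then $f(yu)=K-yu$, and the equilibrium condition reads $K-yu<(K-y)\alpha_1$, i.e. $K(1-\alpha_1) < y(u-\alpha_1)$, which rearranges to $y> \frac{1-\alpha_1}{u-\alpha_1}\,K = L\cdot K$. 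This contradicts the assumption $y\le L\cdot K$, completing the proof.

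The main obstacle I anticipate is the bookkeeping around the point $yu$: one must be careful that $yu\in\X$ (immediate, since $\X$ is closed under multiplication by $u$) and, more delicately, that $yu\notin S$ — which requires knowing $S$ has the form $(0,y]\cap\X$ with $y$ its \emph{maximum}, so that the next grid point up is genuinely outside. A second subtlety is justifying that the downward excursions of the walk from $yu$ back to $y$ have the discount-time distribution of $\xi$ (rather than $\xi+1$ as in $\alpha'$); this hinges on whether the ``clock'' starts at $yu$ or one step earlier, and I would pin it down via the strong Markov property exactly as in the proof of Lemma~\ref{ll3}. Finally, the reduction ruling out the $yu\ge K$ case via Lemma~\ref{ll3} needs the elementary inequality $U<1/u$, which should follow directly from \eqref{U} since the numerator of $U$ is smaller than its denominator by the factor structure $\frac{1-p}{1+\beta}$ versus $\frac{1-p}{u(1+\beta)}$.
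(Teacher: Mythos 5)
Your main computation is the same as the paper's: evaluate the equilibrium condition at the grid point $yu\notin S$, observe that from $yu$ the process first enters $S=(0,y]\cap\X$ exactly when it first hits the level $y$ (it cannot jump over a grid point), so that $\rho(yu,S)$ has the law of $\xi$ for the random walk started at $1$, whence $J(yu,\rho(yu,S))=(K-y)\alpha_1$; the rearrangement of $K-yu<(K-y)\alpha_1$ into $y>LK$ is also exactly the paper's. Your distinction between $\alpha_1$ and $\alpha'$ (no extra unit of delay here, unlike in Lemma~\ref{ll3}) is correct and correctly justified.

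However, your treatment of the boundary case $yu\ge K$ contains a genuine error: the inequality $U<1/u$ that you propose to verify is \emph{false} in general. The paper's own numerical illustration has $u=1.3$, so $1/u=0.7692$, while $U=0.7737>1/u$; indeed $(0,u^{-1}K]\cap\X$ is listed there as an equilibrium, so the case $yu=K$ genuinely occurs for equilibria and cannot be excluded via Lemma~\ref{ll3}. The correct (and much easier) observation is the one the paper makes: $L<1/u$, which is elementary since $\frac{1-\alpha_1}{u-\alpha_1}<\frac1u\iff \alpha_1(u-1)>0$. With that, the conclusion $y>LK$ is \emph{trivially} true whenever $y\ge K/u$, so one only needs the $\alpha_1$ argument when $y<K/u$, in which case $yu<K$ and $f(yu)=K-yu$ automatically. (Equivalently, in your contradiction framing, the hypothesis $y\le LK$ together with $L<1/u$ already forces $yu<K$, so the problematic subcase never arises.) The gap is localized and easily repaired, but as written the step ``check that $U\cdot K<K/u$'' would fail.
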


\begin{proof}
Since $L<\frac{1}{u}$ by definition, the desired result holds trivially for $y\geq K/u$. In the following, we assume that $y<K/u$. Since $u>1$, we have $y<yu<K$, which particularly implies $yu\notin S$.  It follows that
\begin{equation}\label{1111}
K-yu = f(yu) < J(yu,\rho(yu,S))=\E^{yu}\left[\frac{(K-X_{\rho(yu,S)})^+}{1+\beta\rho_{(yu,S)}}\right],
\end{equation}
where the inequality follows from $yu \notin S$ and $S\in\cE$.  Noting that $\rho(yu,S) = \inf\{t\ge 1 : X^{yu}_t =y\}$, we realize the expectation on the right hand side above can be reduced to an expectation involving the random walk $Y$, i.e.
\begin{equation}\label{2222}
\E^{yu}\left[\frac{(K-X_{\rho(yu,S)})^+}{1+\beta\rho_{(yu,S)}}\right]=  \E^{yu}\left[\frac{K-y}{1+\beta\rho_{(yu,S)}}\right]= E^{1}\left[\frac{K-y}{1+\beta\xi}\right]=\alpha_1(K-y). 
\end{equation}
Combining \eqref{1111} and \eqref{2222} yields $y>\frac{1-\alpha_1}{u-\alpha_1} K$, as desired.
\end{proof}

We intend to show that the upper and lower bounds, $U$ and $L$ in \eqref{U} and \eqref{L}, are in fact {\it sharp}, which induces a complete characterization of $\cE$. To this end, we need the following estimate.

\begin{lemma}\label{ll4}
$\alpha_n\geq(\alpha_1)^n$, for all $n\in\N$.
\end{lemma}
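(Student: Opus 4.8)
\textbf{Proof proposal for Lemma~\ref{ll4}.}

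The plan is to exploit the strong Markov property of the random walk $Y$ at the first hitting time of level $n-1$, combined with the super-additivity of the discount function $\delta(t)=1/(1+\beta t)$, which is precisely condition \eqref{DI} rewritten for the hyperbolic discount. First I would set up the relevant hitting times: for $Y_0=n$, let $\xi_{n-1}:=\inf\{t\ge 0: Y_t=n-1\}$ be the first time the walk drops to level $n-1$, and let $\xi=\inf\{t\ge 0: Y_t=0\}$ as in the statement. Since $p\ge\frac{1}{u+1}$ need not imply the walk drifts down, one should note that if $p>1/2$ the walk may never hit $0$; but this causes no problem because on $\{\xi=\infty\}$ the integrand $1/(1+\beta\xi)$ is set to $0$ (consistent with $\lim_{k\to\infty}\delta(k)=0$), and likewise for $\xi_{n-1}$. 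The key decomposition is that, starting from $n$, to reach $0$ the walk must first reach $n-1$ (at time $\xi_{n-1}$) and then, by spatial homogeneity and the strong Markov property, an independent copy of the walk started at $1$ must reach $0$; so $\xi \stackrel{d}{=} \xi_{n-1} + \xi'$ where $\xi'$ is an independent copy of the first-passage time from $1$ to $0$, which has the same law as $\xi$ under $E^1$.

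The main step is then the inequality
\begin{align*}
\alpha_n = E^n\left[\frac{1}{1+\beta\xi}\right]
&= E\left[\frac{1}{1+\beta(\xi_{n-1}+\xi')}\right]\\
&\ge E\left[\frac{1}{1+\beta\xi_{n-1}}\cdot\frac{1}{1+\beta\xi'}\right]
= E\left[\frac{1}{1+\beta\xi_{n-1}}\right] E\left[\frac{1}{1+\beta\xi'}\right] = \alpha_{n-1}\,\alpha_1,
\end{align*}
where the inequality is exactly $\delta(i)\delta(j)\le\delta(i+j)$ applied pointwise with $i=\xi_{n-1}$, $j=\xi'$ (valid also when either is $\infty$, since then both sides are $0$), and the factorization uses independence of $\xi_{n-1}$ and $\xi'$ together with the fact that $E^n[1/(1+\beta\xi_{n-1})]=E^1[1/(1+\beta\xi)]=\alpha_1$ by spatial homogeneity of the walk. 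Iterating $\alpha_n\ge\alpha_1\alpha_{n-1}$ down to $n=1$ gives $\alpha_n\ge(\alpha_1)^n$.

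I expect the only delicate point to be the rigorous justification of the identity $\xi\stackrel{d}{=}\xi_{n-1}+\xi'$ with $\xi_{n-1}\perp\xi'$: this is a standard consequence of the strong Markov property (the post-$\xi_{n-1}$ process is an independent copy of $Y$ started at $n-1$, and hitting $0$ from $n-1$ has the same distribution as hitting $0$ from $1$ shifted, i.e. as $\xi$ under $E^1$), but one must handle the event $\{\xi_{n-1}=\infty\}$ carefully — there the walk never even reaches $n-1$, hence never reaches $0$, so $\xi=\infty$ and the convention makes both sides $0$; and if $\xi_{n-1}<\infty$ but $\xi'=\infty$ then again $\xi=\infty$. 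Once this decomposition and the convention bookkeeping are in place, the rest is immediate from \eqref{DI} and independence, with no further computation needed.
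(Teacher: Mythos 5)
Your argument is essentially the paper's proof: split $\xi$ at the first hitting time of level $n-1$, apply $\delta(i)\delta(j)\le\delta(i+j)$ pointwise, and use the strong Markov property plus spatial homogeneity to factor the expectation into $\alpha_{n-1}\alpha_1$, then iterate. The one slip is in your description of $\xi'$: after reaching $n-1$ the walk must still descend $n-1$ levels, so $\xi'$ is an independent copy of the first-passage time from $n-1$ to $0$ (law of $\xi$ under $E^{n-1}$, giving the factor $\alpha_{n-1}$), not from $1$ to $0$ as you wrote --- your displayed computation already uses the correct value $\alpha_{n-1}$ for that factor, so the argument is fine once the description is corrected.
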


\begin{proof}
Let $\{\F^Y_t\}_{t\ge 0}$ be the natural filtration generated by $Y$. For any $n\in\N$, consider $\sigma:=\inf\{k\geq 0:\ Y_k=n-1\}.$ Then we have
\begin{align*}
\alpha_n&= E^n\left[\frac{1}{1+\beta\xi}\right]= E^n\left[E^n\left[\frac{1}{1+\beta\xi}\Big|\mathcal{F}_\sigma^Y\right]\right]\\
&\geq E^n\left[E^n\left[\frac{1}{1+\beta\sigma}\cdot\frac{1}{1+\beta(\xi-\sigma)}\Big|\mathcal{F}_\sigma^Y\right]\right]=E^n\left[\frac{1}{1+\beta\sigma} E^n\left[\frac{1}{1+\beta(\xi-\sigma)}\Big|\mathcal{F}_\sigma^Y\right]\right]\\
&=\alpha_{n-1}E^n\left[\frac{1}{1+\beta\sigma}\right]=\alpha_{n-1}\cdot\alpha_1.
\end{align*}
The desired result then follows from an induction argument.
\end{proof}

A complete characterization of $\cE$ can now be stated, which in turn gives an explicit formula for the optimal equilibrium. 

\begin{proposition}\label{pp1}
$S\in\cE$ if and only if $S=(0,y]\cap\X$ for some $y\in(L\cdot K,U\cdot K]\cap\X$, where $U$ and $L$ are given in \eqref{U} and \eqref{L}. Consequently, $S^*=(0,y^*]\cap\X$ is the optimal equilibrium, with $y^*:=\min\{(L\cdot K,\infty)\cap\X\}$.
\end{proposition}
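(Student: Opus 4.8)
The plan is to establish the ``if'' direction of the characterization, since Lemmas~\ref{ll2}, \ref{ll3}, and \ref{ll6} already give the ``only if'' direction (any equilibrium is of the form $(0,y]\cap\X$ with $y\in(L\cdot K, U\cdot K]$). So the core task is: for every $y\in(L\cdot K, U\cdot K]\cap\X$, show $S_y:=(0,y]\cap\X$ satisfies $\Theta(S_y)=S_y$. By Remark~\ref{rem:Theta} this amounts to verifying two families of inequalities: $f(x)\ge J(x,\rho(x,S_y))$ for all $x\in S_y$, and $f(x)<J(x,\rho(x,S_y))$ for all $x\in\X\setminus S_y$.

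First I would reduce both inequalities to computations with the random walk $Y$ and the quantities $\alpha_n,\alpha'$ from \eqref{alpha's}. For $x=y u^{-j}\in S_y$ with $j\ge 0$: starting from such $x$, the first return to $S_y$ from strictly positive time is governed by the hitting structure of the walk, and since $x\le y<K$ one has $f(x)=K-x$; the key point is that because $X$ is a submartingale and $f$ is affine on $(0,K)$, stopping later can only decrease the expected (discounted) payoff, so $f(x)\ge J(x,\rho(x,S_y))$ should follow from a submartingale/optional-stopping argument exactly as in the proof of Lemma~\ref{ll2} (the $w<x<z$ argument), applied here with the ``exit region'' being $S_y$ itself. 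Actually the cleanest route: for $x\in S_y$, $\rho(x,S_y)\ge 1$, and on $\{X_1=xu^{-1}\}\subseteq S_y$ we stop at time $1$, while on $\{X_1=xu\}$ we continue; so $J(x,\rho(x,S_y)) = (1-p)\frac{K-xu^{-1}}{1+\beta} + p\,\E^{xu}[\tfrac{(K-X_{\rho})^+}{1+\beta\rho}]$, and the bracketed term is itself bounded by a submartingale comparison. The monotone structure in $j$ means it suffices to check the binding constraint, which occurs at $x=y$ — and that is precisely inequality \eqref{ll3 eqn}, which holds iff $y\le U\cdot K$.

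For $x=yu^{k}\in\X\setminus S_y$ with $k\ge 1$: here $\rho(x,S_y)=\inf\{t\ge1: X^x_t=y\}$ reduces to a first-passage time of $Y$ downward by $k$ steps, so $J(x,\rho(x,S_y)) = (K-y)\,E^{k}[\tfrac{1}{1+\beta\xi}] = (K-y)\alpha_k$ when $yu^k<K$ (and when $yu^k\ge K$, $f(x)=0$ while the right side is strictly positive, done trivially). Thus I need $(K-yu^k)^+ < (K-y)\alpha_k$ for all $k\ge1$ with $yu^k<K$. The $k=1$ case is exactly the strict inequality defining $L$: it holds iff $y>L\cdot K$. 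For $k\ge 2$ I would use Lemma~\ref{ll4} ($\alpha_k\ge(\alpha_1)^k$) together with an induction/telescoping: given $K-yu<(K-y)\alpha_1$, multiply through and use $u>1$, $\alpha_1<1$ to propagate $K-yu^k<(K-y)(\alpha_1)^k\le(K-y)\alpha_k$. Concretely, $K-yu^k = K - u(yu^{k-1})< \ldots$; the induction step needs $K-yu\cdot t < \alpha_1(K-y\cdot t)$ for the relevant ratios $t=u^{k-1}$, which follows from the $k=1$ bound because the map $t\mapsto (K-yut)-\alpha_1(K-yt) = (1-\alpha_1)K - yt(u-\alpha_1)$ is decreasing in $t$ and nonpositive already at $t=1$. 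Finally, the statement about $S^*$: by Propositions~\ref{prop:equi unique} and \ref{p1} (applicable since \eqref{DI} holds), $S^*=\bigcap_{S\in\cE}S$, and since every equilibrium is $(0,y]\cap\X$ with $y>L\cdot K$, the intersection is $(0,y^*]\cap\X$ with $y^*=\min\{(L\cdot K,\infty)\cap\X\}$ — provided this $y^*$ indeed satisfies $y^*\le U\cdot K$ so that it is itself an equilibrium; this needs $L<U$, which should be checked as a direct (if tedious) inequality between the closed-form expressions, using $p\ge\frac1{u+1}$.

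\textbf{Main obstacle.} I expect the routine-but-delicate part to be the monotonicity reductions: arguing rigorously that among $x\in S_y$ the constraint $f(x)\ge J(x,\rho(x,S_y))$ is tightest at $x=y$, and among $x\notin S_y$ it is tightest at $x=yu$, so that the two boundary inequalities \eqref{ll3 eqn} and \eqref{1111} are not merely necessary but sufficient. The submartingale comparison handles the ``stopping later is worse'' intuition, but one must be careful that $\rho(x,S_y)$ depends on $y$ and that $f$ is only affine on $(0,K)$, not globally; the cleanest fix is to run the same $w<x<z$ replacement argument from Lemma~\ref{ll2}'s proof locally. The other genuinely computational hurdle is verifying $L<U$ from \eqref{U} and \eqref{L} in closed form, which I would do by clearing denominators and invoking $\alpha'<\alpha_1$ (immediate from \eqref{alpha's explicit}, term by term) and the submartingale condition $p\ge\frac1{u+1}$.
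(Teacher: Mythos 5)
Your treatment of the equilibrium characterization itself matches the paper's proof in all essentials: the three lemmas give the necessity of the form $(0,y]\cap\X$ with $y\in(L\cdot K,U\cdot K]$; for $x\in S_y$ with $x<y$ the paper likewise uses the submartingale property and optional sampling (the ``monotonicity reduction'' you flag as the main obstacle is a non-issue, since for $x\le yu^{-1}$ one has $\rho(x,S_y)=1$ outright and the inequality is immediate and strict --- note also that your time-one decomposition of $J(x,\rho(x,S_y))$ is only valid at $x=y$, because for $x<y$ the up-branch $xu$ also lies in $S_y$); the boundary case $x=y$ is exactly \eqref{ll3 eqn} and uses $y\le U\cdot K$; and for $x=yu^k\notin S_y$ the identity $J(x,\rho(x,S_y))=(K-y)\alpha_k$, Lemma~\ref{ll4}, and your telescoping induction reproduce the paper's estimate \eqref{y>}.

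The one genuine gap is in your final step. To conclude that $y^*:=\min\{(L\cdot K,\infty)\cap\X\}$ yields an equilibrium, you propose to verify $L<U$ in closed form. That is both unnecessary and insufficient: since $\X=\{u^i:i\in\Z\}$ is a discrete grid, $L<U$ does not guarantee that $(L\cdot K,U\cdot K]$ contains a point of $\X$, hence does not guarantee $y^*\le U\cdot K$, which is what you actually need. What closes this is the existence result already in hand: by Corollary~\ref{coro:existence of equi}, $\cE\neq\emptyset$, and by the necessity direction every equilibrium is $(0,y]\cap\X$ with $y\in(L\cdot K,U\cdot K]\cap\X$, so this set is nonempty and its minimum equals $y^*$; then $(0,y^*]\cap\X=\bigcap_{S\in\cE}S$ is an equilibrium by the characterization just proved, and Proposition~\ref{p1} makes it optimal. (The paper simply invokes Theorem~\ref{t2} or Theorem~\ref{t3} for this last assertion, which subsumes the same point.)
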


\begin{proof}
The sufficiency follows from Lemmas \ref{ll2}, \ref{ll3} and \ref{ll6}. For the necessity, consider $S=(0,y]\cap\X$ for some $y\in(LK,UK]\cap\X$, and we intend to prove that $S\in\cE$. Fix $x\in S$. If $x<y$, since $y\le U\cdot K<K$, by the fact that $X$ is a submartingale and using the optional sampling theorem,
\[
f(x) = K-x>\E^x\left[\frac{K-X_{\rho(x,S)}}{1+\beta\rho(x,S)}\right] = J(x,\rho(x,S)).
\]
On the other hand, if $x=y$, the same argument in \eqref{ll3 eqn} gives
\[
J(x,\rho(x,S)) = (1-p)\cdot\frac{K-yu^{-1}}{1+\beta}+p\cdot(K-y)\alpha'\le K-y = f(y), 
\]
where the inequality is due to $y\le U\cdot K$. Thus, we conclude that $f(x)\ge J(x,\rho(x,S))$ for all $x\in S$. 
Now, fix $x\in\X\setminus S$. 
If $x\geq K$, obviously
$$f(x)=(K-x)^+=0<\E^x\left[\frac{(K-X_{\rho(x,S)})^+}{1+\beta\rho(x,S)}\right]=J(x,\rho(x,S)).$$
If $x<K$, since $x>y$, there must  exist  $n\in\N$ such that $yu^{n-1}< x= yu^{n}$. Since $y>L\cdot K$ and $u>1$, 
\begin{equation}\label{y>}
y> L\cdot K\cdot \frac{1+\alpha_1+(\alpha_1)^2+...+(\alpha_1)^{n-1}}{u^{n-1}+u^{n-2}\alpha_1+u^{n-3}(\alpha_1)^2+...+(\alpha_1)^{n-1}} = \frac{1-(\alpha_1)^n}{u^n-(\alpha_1)^n} K.
\end{equation}
Recall $\alpha_n$ in \eqref{alpha's}, and observe that
\[
J(x,\rho(x,S))=\E^x\left[\frac{K-y}{1+\beta\rho(x,S)}\right] = (K-y) \alpha_n \ge (K-y) (\alpha_1)^n> K-yu^n = K-x=f(x),
\]
where the first inequality follows from Lemma~\ref{ll4} and the second inequality is due to \eqref{y>}. Thus, we conclude that $f(x)< J(x,\rho(x,S))$ for all $x\notin S$. This readily shows that $S\in\cE$.

The second assertion of this proposition follows from either Theorem~\ref{t2} or Theorem \ref{t3}.
\end{proof}

\begin{remark}[Numerical results] Let $\beta=0.2$, $u=1.3$, $p=0.5$ and $K=1$. Then $\X = \{..., (1.3)^{-2}, (1.3)^{-1}, 1,1.3, (1.3)^2,...\}$, and $L=0.5814$, $U=0.7737$ (by \eqref{L}, \eqref{U}, and \eqref{alpha's explicit}). Since $(L\cdot K, R\cdot K] \cap \X = (0.5814, 0.7737]\cap \X = \{(1.3)^{-2}, (1.3)^{-1}\}$, we conclude from Proposition~\ref{pp1} that 
\[
\cE = \{(0,(1.3)^{-2}]\cap\X, (0,(1.3)^{-1}]\cap\X\} = \{(0,0.5917]\cap\X, (0,0.7692]\cap\X\},
\]
and $(0,0.5917]\cap\X$ is the optimal equilibrium.
\end{remark}

%%%%%%%%%%%%%%%%%%%%%%%%%%%%%%%%%%%%%%%%%%%%%%%%%
%%%%%%%%%%%%%%%%%%%%%%%%%%%%%%%%%%%%%%%%%%%%%%%%%

\section{Conclusion}\label{sec:conclusion}
For a time-inconsistent problem in discrete time, questions (a) and (b) proposed in Section~\ref{sec:introduction} are nontrivial, when the time horizon is infinite. In this paper, we focus on an infinite-horizon stopping problem under non-exponential discounting, and develop the iterative approach to resolve both (a) and (b). 

%We develop the iterative approach to resolve them. We show that an equilibrium can be found through fixed-point iteration, and exists a unique equilibrium that dominates any other equilibrium at all times.   

%  This paper focuses on a stopping problem under non-exponential discounting. We develop in this paper the iterative approach to show that a optimal equilibirum

%For time-inconsistent problems in discrete time, whether the time horizon is finite makes a big difference. It is well-known in the literature that backward sequential optimization determines the unique equilibrium under finite horizon. By contrast, when the horizon is infinite,   

The iterative approach is motivated by the continuous-time framework in Huang and Nguyen-Huu \cite{HN17}. The current discrete-time setup, however, gives rise to fundamental differences. First, when rephrased using the notation in the present paper, \cite[Proposition 3.2]{HN17} states that $\bigcup_{n\in\N} \Theta^n(S)$ is an equilibrium, as long as $S\subseteq \Theta(S)$ holds in the first iteration. While the condition $S\subseteq \Theta(S)$ is easily satisfied in continuous time (as explained in \cite[Remark 3.5]{HN17}), it is not true in general in discrete time. In response to this, we develop the fixed-point iteration in the opposite way: Theorem~\ref{t1} shows that $ \bigcap_{n\in\N} \Theta^n(S)$ is an equilibrium, as long as $\Theta(S)\subseteq S$ holds in the first iteration. This in particular leads to the existence of an optimal equilibrium (Theorems~\ref{t2} and \ref{t3}), a result that has not been established in continuous time. Note that the proofs in the present paper hinge on the discrete-time structure, and cannot be extended to continuous time in an obvious way. It is therefore of interest to investigate, as a future research project, whether an optimal equilibrium exists in continuous time.

%\bibliographystyle{plainnat}
%\bibliographystyle{siam}
%\bibliography{refs}

\begin{thebibliography}{10}

\bibitem{Ainslie-book-92}
{\sc G.~Ainslie}, {\em Picoeconomics}, Cambridge University Press, Cambridge,
  UK, 1992.

\bibitem{Asheim97}
{\sc G.~Asheim}, {\em Individual and collective time-consistency}, Review of
  Economic Studies, 64 (1997), pp.~427--443.

\bibitem{BS12}
{\sc E.~Bayraktar and M.~S\^\i~rbu}, {\em Stochastic {P}erron's method and
  verification without smoothness using viscosity comparison: the linear case},
  Proc. Amer. Math. Soc., 140 (2012), pp.~3645--3654.

\bibitem{BKM17}
{\sc T.~Bj{\"o}rk, M.~Khapko, and A.~Murgoci}, {\em On time-inconsistent
  stochastic control in continuous time}, Finance and Stochastics, 21 (2017),
  pp.~331--360.

\bibitem{BMZ14}
{\sc T.~Bj{\"o}rk, A.~Murgoci, and X.~Y. Zhou}, {\em Mean-variance portfolio
  optimization with state-dependent risk aversion}, Math. Finance, 24 (2014),
  pp.~1--24.

\bibitem{EL06}
{\sc I.~Ekeland and A.~Lazrak}, {\em Being serious about non-commitment:
  subgame perfect equilibrium in continuous time}, tech. rep., University of
  British Columbia, 2006.
\newblock Available at http://arxiv.org/abs/math/0604264.

\bibitem{EMP12}
{\sc I.~Ekeland, O.~Mbodji, and T.~A. Pirvu}, {\em Time-consistent portfolio
  management}, SIAM J. Financial Math., 3 (2012), pp.~1--32.

\bibitem{EP08}
{\sc I.~Ekeland and T.~A. Pirvu}, {\em Investment and consumption without
  commitment}, Math. Financ. Econ., 2 (2008), pp.~57--86.

\bibitem{GW07}
{\sc S.~R. Grenadier and N.~Wang}, {\em Investment under uncertainty and
  time-inconsistent preferences}, Journal of Financial Economics, 84 (2007),
  pp.~2--39.

\bibitem{Hayek36}
{\sc F.~Hayek}, {\em Utility analysis and interest}, The Economic Journal, 46
  (1936), pp.~44--60.

\bibitem{HJZ12}
{\sc Y.~Hu, H.~Jin, and X.~Y. Zhou}, {\em Time-inconsistent stochastic
  linear-quadratic control}, SIAM J. Control Optim., 50 (2012), pp.~1548--1572.

\bibitem{HN17}
{\sc Y.-J. Huang and A.~Nguyen-Huu}, {\em Time-consistent stopping under
  decreasing impatience}, Finance and Stochastics, 22 (2018), pp.~69--95.

\bibitem{HNZ17}
{\sc Y.-J. Huang, A.~Nguyen-Huu, and X.~Y. Zhou}, {\em General stopping behaviors of
  naive and non-committed sophisticated agents, with application to probability distortion},
  preprint,  (2018).
\newblock Available at https://arxiv.org/abs/1709.03535.

\bibitem{karp2007non}
{\sc L.~Karp}, {\em Non-constant discounting in continuous time}, Journal of
  Economic Theory, 132 (2007), pp.~557--568.

\bibitem{Kocher96}
{\sc N.~Kocherlakota}, {\em Reconsideration-proofness: A refinement for
  infinite horizon time inconsistency}, Games and Economic Behavior, 15 (1996),
  pp.~33--54.

\bibitem{Laibson97}
{\sc D.~Laibson}, {\em Golden eggs and hyperbolic discounting}, Q. J. Econ, 112
  (1997), pp.~443--477.

\bibitem{LP92}
{\sc G.~Loewenstein and D.~Prelec}, {\em Anomalies in intertemporal choice:
  evidence and an interpretation}, Q. J. Econ., 57 (1992), pp.~573--598.

\bibitem{LT89}
{\sc G.~Loewenstein and R.~Thaler}, {\em Anomalies: Intertemporal choice},
  Journal of Economic Perspectives, 3 (1989), pp.~181--193.

\bibitem{NP05}
{\sc A.~Molavi~Vasséi}, {\em Recursive utility, increasing impatience and
  capital deepening: F.{A}. hayek's 'utility analysis and interest'}, The
  European Journal of the History of Economic Thought, 22 (2015),
  pp.~1000--1041.

\bibitem{Noor2009a}
{\sc J.~Noor}, {\em Decreasing impatience and the magnitude effect jointly
  contradict exponential discounting}, Journal of Economics Theory, 144 (2009),
  pp.~869--875.

\bibitem{PY73}
{\sc B.~Peleg and M.~E. Yaari}, {\em On the existence of a consistent course of
  action when tastes are changing}, Review of Economic Studies, 40 (1973),
  pp.~391--401.

\bibitem{PP68}
{\sc E.~Phelps and R.~A. Pollak}, {\em On second-best national saving and
  game-equilibrium growth}, Review of Economic Studies, 35 (1968),
  pp.~185--199.

\bibitem{Pollak68}
{\sc R.~A. Pollak}, {\em Consistent planning}, The Review of Economic Studies,
  (1968), pp.~201--208.

\bibitem{Prelec04}
{\sc D.~Prelec}, {\em Decreasing impatience: A criterion for non-stationary
  time preference and ``hyperbolic'' discounting}, Scand. J. of Econ., 106
  (2004), pp.~511--532.

\bibitem{Stahl93}
{\sc D.~Stahl}, {\em Evolution of smart-n players}, Games and Economic
  Behavior, 5 (1993), pp.~604--617.

\bibitem{SW94}
{\sc D.~Stahl and P.~Wilson}, {\em Experimental evidence on players' models of
  other players}, Journal of Economic Behavior and Organization, 25 (1994),
  pp.~309--327.

\bibitem{Strotz55}
{\sc R.~H. Strotz}, {\em Myopia and inconsistency in dynamic utility
  maximization}, The Review of Economic Studies, 23 (1955), pp.~165--180.

\bibitem{Thaler81}
{\sc R.~Thaler}, {\em Some empirical evidence on dynamic inconsistency}, Econ.
  Lett., 8 (1981), pp.~201--207.

\bibitem{Yong12}
{\sc J.~Yong}, {\em Time-inconsistent optimal control problems and the
  equilibrium \textsc{HJB} equation}, Mathematical Control and Related Fields,
  3 (2012), pp.~271--329.

\end{thebibliography}

\end{document}